\documentclass[final,hidelinks,onefignum,onetabnum]{siamart251216}

\usepackage[margin=1in]{geometry}
\usepackage{mathrsfs,amsmath,amssymb,amsfonts,amsopn}
\usepackage{graphicx}
\usepackage{enumitem}
\usepackage{hyperref}

\ifpdf
  \DeclareGraphicsExtensions{.eps,.pdf,.png,.jpg}
\else
  \DeclareGraphicsExtensions{.eps}
\fi

\newcommand{\R}{\mathbb{R}}
\newcommand{\tu}[1]{\mathrm{#1}} 
\newcommand{\C}{\mathbb{C}}
\newcommand{\Z}{\mathbb{Z}}
\newcommand{\smat}[1]{\left[\begin{smallmatrix}#1\end{smallmatrix}\right]}
\newcommand{\bmat}[1]{\begin{bmatrix}#1\end{bmatrix}}
\newcommand{\smatNoB}[1]{\begin{smallmatrix}#1\end{smallmatrix}}

\newcommand{\Obs}{\mathcal{O}}
\newcommand{\Tu}{\mathcal{T}_{\tu{u}}}
\newcommand{\Tw}{\mathcal{T}_{\tu{w}}}
\newcommand{\Ru}{\mathcal{R}_{\tu{u}}}
\newcommand{\Rw}{\mathcal{R}_{\tu{w}}}
\newcommand{\dotsS}{\rotatebox{0}{\tiny\dots}}
\newcommand{\vdotsS}{\rotatebox{90}{\tiny\dots}}
\newcommand{\ddotsS}{\rotatebox{135}{\tiny\dots}}
\newcommand{\e}{\mathrm{e}} 
\newcommand{\newM}{\mathcal{M}} 
\newcommand{\Ab}{\mathbf{A}}
\newcommand{\Bb}{\mathbf{B}}
\newcommand{\Lb}{\mathbf{L}}
\newcommand{\Fb}{\mathbf{F}}
\newcommand{\Pb}{\mathbf{P}}
\newcommand{\Sb}{\mathbf{S}}
\renewcommand{\i}{\mathrm{i}} 
\newenvironment{smallarray}[1]
 {\null\,\vcenter\bgroup\scriptsize
  \arraycolsep=.13885em
  \hbox\bgroup$\array{@{}#1@{}}}
 {\endarray$\egroup\egroup\,\null}

\newsiamremark{remark}{Remark}
\newsiamremark{hypothesis}{Hypothesis}
\crefname{hypothesis}{Hypothesis}{Hypotheses}
\newsiamthm{claim}{Claim}
\newsiamremark{fact}{Fact}
\crefname{fact}{Fact}{Facts}
\newsiamremark{assumption}{Assumption}
\newsiamremark{problem}{Problem}

\headers{Output regulation via input-output data}{A. Bisoffi, W. Liu, Z. Hu, and C. De Persis}
\title{Output regulation via input-output data%
\thanks{
The work of Wenjie Liu was carried out while she was affiliated with the Engineering and Technology Institute, University of Groningen.
}
}

\author{
Andrea Bisoffi\footnotemark[2]\thanks{Department of Electronics, Information, and Bioengineering, Politecnico di Milano, 20133, Italy (\email{andrea.bisoffi@polimi.it}).}
\and 
Wenjie Liu\thanks{School of Electrical and Electronic Engineering, Nanyang Technological University, Singapore 639798, Singapore (\email{wenjie.liu@ntu.edu.sg}).}
\and 
Zhongjie Hu\thanks{School of Artificial Intelligence and Automation and Hubei Key Laboratory of Brain-Inspired Intelligent Systems, Huazhong University of Science and Technology, Wuhan, Hubei 430074, China (\email{zhongjiehu@hust.edu.cn}).}
\and 
Claudio \mbox{De Persis}\thanks{Engineering and Technology Institute, University of Groningen, 9747 AG, The Netherlands (\email{c.de.persis@rug.nl}).}}

\DeclareMathOperator{\rank}{rank}

\ifpdf
\hypersetup{
  pdftitle={Output regulation via input-output data},
  pdfauthor={A. Bisoffi, W. Liu, Z. Hu, C. De Persis}
}
\fi

\allowdisplaybreaks

\begin{document}

\maketitle

\begin{abstract}
From a multi-input-multi-output (MIMO) discrete-time linear system, we collect input-output data affected by noise in the form of an unknown exosignal and, from these data points (without knowledge of the system model), we design a feedback controller that asymptotically annihilates the effect of that exosignal on the output.
This amounts to solving an output regulation problem purely from input-output data, for MIMO linear systems.
The design of the controller corresponds to a semidefinite program and is pursued on a suitable auxiliary system.
Such design carries over from the auxiliary system to the original one by a rigorous examination of the relation between the solutions of the two systems.
\end{abstract}

\begin{keywords}
data-driven control, output regulation, stabilization, disturbance rejection, semidefinite program, linear multivariable systems\color{black}
\end{keywords}

\section{Introduction}

The objective of output regulation is the design of a controller that asymptotically annihilates the effect of a so-called exosignal on selected error variables while ensuring asymptotic stabilization whenever the exosignal is set to zero.
The exosignal corresponds, in general, to reference or disturbance signals and belongs to canonical classes (e.g., step, ramp, sinusoidal or exponential signals) since it is generated by a dynamical system called the exosystem.
The exosignal acts both when data points are collected and when a controller is put in feedback with the system to be controlled.
We would like to address the problem of designing a controller for output regulation using only data points of the control inputs and of the outputs.

Within data-based output regulation, a first significant challenge comes from handling the unknown exosignal contained in the collected data.
For instance, the exosignal is measured during data collection in~\cite{Zhu2024output-regulation} and \cite{li2026cooperative}.
However, when the exosignal corresponds to a disturbance, measuring it becomes impractical. 
To the best of our knowledge this challenge was first addressed in \cite{hu2024dd-output-regulation}, see also \cite{hu2025enforcing}, via the framework in~\cite{de2019formulas} and for continuous-time systems.
A second significant challenge comes when, for multi-input-multi-output (MIMO) systems, input-output data are used instead of input-state data.
In applications, availability of the former is much more common than that of the latter.
For instance, \cite{Zhu2024output-regulation} uses input-state data (and state feedback) in a discrete-time setting and \cite{hu2024dd-output-regulation} uses input-state-state derivative data in a continuous-time setting.
Here, we would like to design a controller for output regulation from data while overcoming both previous challenges.

\subsection{Contribution} The defining features of the proposed approach are the following. (i)~The influence of the unknown exosignal on input-output data is completely removed. 
(ii)~As a consequence of knowing the exosystem, no bounds on the ``size'' of the exosignal are required to design a controller.
(iii)~The design of the output regulation controller is based on an auxiliary system associated with the actual system to be controlled and a rigorous relation between the two is established in terms of their solutions.
(iv)~Controller design corresponds to a semidefinite, and thus convex, program.

\subsection{Relation with the literature}

The present treatment builds upon our previous works in \cite{de2019formulas,li2024controller,hu2024dd-output-regulation}. From~\cite{de2019formulas,li2024controller}, we borrow the approach of associating an auxiliary system with the actual system in order to deal with input-output data.
On the other hand, \cite{de2019formulas} uses noise-free data and \cite{li2024controller} uses input-output data corrupted by bounded noise and both solve a stabilization problem, which is a special case of an output regulation one.
Here, no bounds on the noise are assumed and the approach borrowed from ~\cite{de2019formulas,li2024controller} requires to be adapted to incorporate the presence of the exosignal during data collection and control execution. Dispensing with noise bounds is enabled by a technique from~\cite{hu2024dd-output-regulation}.
\cite{hu2024dd-output-regulation} also solves a regulation problem, but uses input-state-state derivative data.
This is a key difference with respect to here, where the use of input-output data induces nontrivial sophistication in the setting.
Additional differences are that \cite{hu2024dd-output-regulation} operates in continuous time whereas we develop the present approach in discrete time and that the present assumption on the exosystem, see Assumption~\ref{ass:S}, is less stringent than \cite[Assumption~2]{hu2024dd-output-regulation} and allows the consideration of a larger set of exosignals. 

Other works focused on data-based regulation are \cite{li2026cooperative,liu2025data,liuguo2025data,bosso2025data,mao2025one}, which we now discuss one by one.
\cite{li2026cooperative} assumes that the exosignal is measured during data collection, which does not capture the realistic case when the exosignal is a disturbance, and addresses a multiagent system whereas we focus here on the problem of output regulation from input-output data and also on some of current limitations, see Proposition~\ref{proposition:ex_remark}.
\cite{liu2025data} works in a continuous-time setting and requires measurements of  state and state derivatives. Furthermore, as it does not employ the disturbance filtering technique of \cite{hu2024dd-output-regulation}, the exosignal must be assumed to be bounded by known constants.
As for \cite{liuguo2025data}, its approach is different in nature from ours since it requires a preliminary incremental passivation; \cite{liuguo2025data} also considers a continuous-time setting and relies on state and state derivatives.
\cite{bosso2025data} considers the design of a controller for output regulation based on input-output data, but while  it provides results based on the arguments of \cite{de2019formulas,li2024controller} on one hand, on the other hand it pursues an approach rooted in observers, which is conceptually different than the approach pursued here; moreover, it is developed in continuous time and is focused on the challenge of solving such a problem without the measurements of the time derivatives of the outputs.
\cite{mao2025one} considers our same data-based output regulation problem in \cite[\S IV]{mao2025one} and solves it into two steps (corresponding to two linear matrix inequalities) under an assumption on a certain Krylov matrix \cite[Assumption~4]{mao2025one}, which is not needed in our one-step solution;
the effectiveness of the control design is proven in the case of state measurements and its extension is proposed in \cite[Algorithm 2]{mao2025one} without formal proofs; it is unclear whether this extension is viable in the case of output measurement from MIMO systems whereas our results are shown to hold for (a class of) MIMO systems, in particular by rigorously establishing a relation between the solutions of the auxiliary and the actual systems.

\subsection{Outline}

Section~\ref{sec:problem} formulates the output regulation problem to be solved from input-output data corrupted by noise in the form of an unknown exosignal and associates an auxiliary system with the actual linear system generating the data.
Section~\ref{sec:noise_indep_cl} obtains a closed-loop representation of the auxiliary system that is independent of the unknown exosignal.
Section~\ref{sec:output_reg_aux_act_sys} proposes our solution by first addressing a data-driven output regulation problem on the auxiliary system, in Section~\ref{sec:output_reg_aux_sys}, and then extending it to the considered problem on the actual system, in Section~\ref{sec:ddctrl:sys-auxsys}, by rigorously relating the input-output evolutions of auxiliary and actual system.
Section~\ref{sec:numerical_example} validates the effectiveness of the proposed method on a numerical example.
Proofs are in the appendix.

\subsection{Notation}
\label{sec:notation}
The set of integers is $\Z$.
The set of natural numbers, zero included, is $\Z_{\ge 0}$.
The identity matrix of size $n$ and the zero matrix of size $m$-by-$n$ are $I_n$ and $0_{m,n}$ wherein the indices are dropped when no confusion arises.
The set of $n \times n$ symmetric matrices is $\mathbb{S}^n$.
For a symmetric matrix $M$, $M \succ 0$ means that $M$ is positive definite.
For $A_1 \in \C^{n_1 \times n_1}$, \dots, $A_k \in \C^{n_k \times n_k}$, the block diagonal matrix $\smat{A_1 & \dots & 0\\ \vdotsS & \ddotsS & \vdotsS\\ 0 & \dots & A_k}$ is written $A_1 \oplus \dots \oplus A_k$, which is the direct sum of matrices $A_1$, \dots, $A_k$.
The imaginary unit is $\i$.
For $a \in \R$ and $b \in \R$, the complex conjugate of $a+ \i b$ is $\overline{a+ \i b} = a - \i b$.
For integers $p$ and $q$ such that $p \ge q \ge 0$, ${p \choose q} := \frac{p!}{q!(p-q)!}$;
for integers $p$ and $q$ such that $p \ge q$, $p \ge 0$ and $q < 0$, ${p \choose q} :=0$.
For column vectors $x_1$ and $x_2$, $(x_1,x_2) = \smat{x_1\\ x_2}$.

\section{Problem formulation}
\label{sec:problem}

Consider the discrete-time linear system
\begin{subequations}\label{eq:sys}
\begin{align}
w^+ &= S w\label{eq:sys:w}\\
x^+ &= A x + B u + P w\label{eq:sys:x}\\
y &= C x + Q w\label{eq:sys:y}
\end{align}
\end{subequations}
where $w \in \mathbb{R}^{n_w}$, $x \in \mathbb{R}^{n}$, $u \in \mathbb{R}^{m}$ and $y \in \mathbb{R}^{p}$ are exosignal, state, input and output, respectively.
The matrices $A$, $B$, $P$, $C$, $Q$ are unknown to us whereas $S \in \mathbb{R}^{n_w \times n_w}$ is known.
The exosignal $w$ and the state $x$ are not measured and only input $u$ and output $y$ are.
We rely on the measurements of $u$ and $y$, affected by the unknown $w$, to compensate for the lack of knowledge of $A$, $B$, $P$, $C$, $Q$ and to design a control law for $u$ that solves an output regulation problem, as specified below in Problem~\ref{prob:orp_ori}.

We make the next assumption on the matrix $S$, which is standard in the context of output regulation.
\begin{assumption}
\label{ass:S}
All eigenvalues of $S$ belong to $\{ z \in \C \colon |z| \ge 1\}$.
\end{assumption}

Since $S$ is known, we construct the internal model
\begin{equation}\label{eq:internal_model_eta}
    \eta^+ = \Phi \eta + G y
\end{equation}
of the exosystem where
\begin{align}
\label{eq:im-matrices}
\Phi :=
\smat{
0 & I_p & 0 & \dots & 0\\
0 & 0 & I_p & \dots & 0\\
\vdotsS & \vdotsS & \vdotsS & \ddotsS & \vdotsS\\
0 & 0 & 0 & \dots & I_p\\
-s_0 I_p &-s_1 I_p &-s_2 I_p &\ldots & -s_{d-1} I_p 
} \in \mathbb{R}^{pd \times pd},
\quad G := 
\smat{
0 \\ 0\\ \vdotsS \\ 0 \\I_p
} \in \mathbb{R}^{pd \times p}
\end{align}
and  $s_0, \dots, s_{d-1}$ are the coefficients of the minimal polynomial $m_{S}$ of $S$ \cite[Def.~3.3.2]{horn2013matrix}, i.e., $m_{S}(\lambda) := s_0 + s_1 \lambda + \dots + s_{d - 1} \lambda^{d - 1} + \lambda^d$.

If the pair $(A,C)$ in~\eqref{eq:sys} is observable, there exists $l$ such that
\begin{align*}
\rank \smat{C \\ C A  \\ \vdotsS \\ C A^{l-1}} = n
\end{align*}
and the minimum $l$ for which this rank condition holds is $\ell$, which is called the observability index (of pair $(A,C)$) \cite[p.~356-357]{kailath1980linear}.
In the considered input-output setting, our prior knowledge on~\eqref{eq:sys} is thus summarized next.

\begin{assumption}
\label{ass:observ}
The pair $(A, C)$ is observable and the observability index $\ell$ of $(A, C)$ is known.
\end{assumption}

Based on the observability index $\ell$ we define the matrices 
\begin{subequations}
\label{eq:pre:matrix:OTR}
\begin{align}
\Obs & := 
\smat{
C \\ C A  \\ \vdotsS \\ C A^{\ell-1}
} \label{eq:pre:matrix:OTR:O}\\
\Tu & :=
\smat{
0 & 0 & \dots & 0 & 0\\
CB                     & 0 & \dots & 0 & 0\\
CAB                    & CB                     & \dots & 0 & 0\\
\vdotsS                & \vdotsS                 & \ddotsS &  \vdotsS & \vdotsS\\
CA^{\ell -2} B         & CA^{\ell -3} B         & \dots & CB &0
} \label{eq:pre:matrix:OTR:T_B}\\
\Tw & :=
\smat{
Q & 0 & \dots & 0 & 0\\
CP                     & Q & \dots & 0 & 0\\
CAP                    & CP                     & \dots & 0 & 0\\
\vdotsS                & \vdotsS                 & \ddotsS &  \vdotsS & \vdotsS\\
CA^{\ell -2} P         & CA^{\ell -3} P         & \dots & CP &Q
} \label{eq:pre:matrix:OTR:T_P}\\
\Ru & := \smat{ A^{\ell-1} B & \dots & A B & B } \label{eq:pre:matrix:OTR:R_B}\\
\Rw & := \smat{ A^{\ell-1} P & \dots & A P & P } \label{eq:pre:matrix:OTR:R_P}
\end{align}
\end{subequations}
where $\Obs$ possesses a left inverse $\Obs^{\tu{L}}$, which satisfies $\Obs^{\tu{L}} \Obs = I_n$, since $\rank \Obs = n$.

Apart from Assumption~\ref{ass:observ}, our knowledge of~\eqref{eq:sys} is based on data collected in an experiment on~\eqref{eq:sys} and \eqref{eq:internal_model_eta}.
The experiment returns, for some integer $T \ge \ell$,
\begin{subequations}
\label{eq:data}
\begin{align}
U_1  &:= 
\smat{u(\ell) & u(\ell+1) & \dots & u(T)}\label{eq:data:V1}\\
\Psi_0  &:= 
\left[
\begin{smallarray}{cccc}
y(0) & y(1) & \dotsS & y(T-\ell)\\
\vdotsS & \vdotsS &   & \vdotsS \\
y(\ell-1) & y(\ell) & \dots & y(T-1) \\
\hline
u(0) & u(1) & \dotsS & u(T-\ell) \rule{0pt}{8pt}\\
\vdotsS & \vdotsS &  & \vdotsS\\
u(\ell-1) & u(\ell) & \dots & u(T-1)\\
\hline
    \eta(\ell) & \eta(\ell + 1) & \cdots & \eta(T)
\end{smallarray}
\right] \label{eq:data:Psi0}
\\
\Psi_1  &:=
\left[
\begin{smallarray}{cccc}
y(1) & y(2) & \dots & y(T-\ell+1)\\
\vdotsS & \vdotsS &   & \vdotsS \\
y(\ell) & y(\ell+1) & \dots & y(T)\\ 
\hline 
u(1) & u(2) & \dots & u(T-\ell+1) \rule{0pt}{8pt}\\
\vdotsS & \vdotsS &  & \vdotsS\\
u(\ell) & u(\ell+1) & \dots & u(T)\\ 
\hline
\eta(\ell + 1) & \eta(\ell + 2) & \cdots & \eta(T+1)
\end{smallarray}
\right]; \label{eq:data:Psi1}
\end{align}
\end{subequations}
in the experiment, the action of the unknown $w$ leads to the unknown
\begin{align}
W_0 &:= 
\smat{w(\ell) & w(\ell + 1) & \cdots & w(T)
} \label{eq:W0}.
\end{align}

With all these elements, we can formulate our objective, i.e., to solve an output regulation problem \cite[p.~84]{isidori2017lectures} \emph{from data}.

\begin{problem}
\label{prob:orp_ori}
With data $U_1$, $\Psi_0$, $\Psi_1$, find a control law for $u$ in~\eqref{eq:sys} so that
    \begin{itemize}[wide]
        \item [(i)] the resulting closed loop with $w = 0$ is globally asymptotically stable;
        \item [(ii)] the resulting closed loop has $\lim_{k \to + \infty} y(k)= 0$ for each initial state.
    \end{itemize}
\end{problem}

Thanks to Assumptions~\ref{ass:S}-\ref{ass:observ}, we can associate, with~\eqref{eq:sys}, an auxiliary system%
\begin{subequations}\label{eq:sys:aux}
\begin{align}
\omega^+ &= S \omega	\label{eq:sys:aux:w}\\
\xi^+ & = \Ab \xi + \Bb v  + \Lb Z_{2} \Sb \omega		\label{eq:sys:aux:xi}    \\
\varphi & = Z_{1} \xi + Z_{2} \Sb \omega		\label{eq:sys:aux:y}
\end{align}
\end{subequations}
where
\begin{subequations}\label{eq:pre:matrix_SABZ}
\begin{align}
    &\begin{bmatrix}
\Fb & \Lb & \Bb
\end{bmatrix} \\
& \quad := 
\begin{bmatrix}
\left[ \begin{array}{c|c}
\smatNoB{
0 & I_p & 0 & \cdots & 0 \\ 
0 & 0 & I_p & \cdots & 0 \\ 
\vdotsS & \vdotsS & \vdotsS & \ddotsS & \vdotsS \\ 
0 & 0 & 0 & \cdots & I_p \\ 
0 & 0 & 0 & \cdots & 0
} 
& 0_{p \ell, m \ell} \\ 
\hline 
0_{m \ell, p \ell} & 
\smatNoB{
0 & I_m & 0 & \cdots & 0 \\ 
0 & 0 & I_m & \cdots & 0 \\ 
\vdotsS & \vdotsS & \vdotsS & \ddotsS & \vdotsS \\ 
0 & 0 & 0 & \dots & I_m \\ 
0 & 0 & 0 & \dots & 0
} 
\end{array} \right]
&
\left[%
\begin{array}{c}
\smatNoB{
0 \\[2pt] 
0 \\[1pt] 
\vdotsS \\[1pt] 
0 \\[1pt]
I_p } \\[1pt] 
\hline  
\smatNoB{
0 \\[1pt]  
0 \\[1pt] 
\vdotsS \\[1pt] 
0 \\[1pt] 
0} 
\end{array} \right]
&
\left[ \begin{array}{c}
\smatNoB{ 
0 \\[2pt] 
0 \\[1pt] 
\vdotsS \\[1pt] 
0 \\[1pt]
0\rotatebox{180}{\rule{0pt}{2.5pt}}
} \\ 
\hline  
\smatNoB{
0 \\[1pt] 
0 \\[1pt] 
\vdotsS \\[1pt] 
0 \\[1pt] 
I_m
} 
\end{array} \right]
\end{bmatrix},\\
& \Ab := \Fb + \Lb Z_1, \quad \Sb := \smat{S^{-\ell} \\ \vdotsS \\ S^{-1} \\ I_{n_w} },\label{eq:pre:matrix_SABZ:S} \\
& Z_1 := 
\begin{bmatrix}
  CA^{\ell} \Obs^{\tu{L}} &   C\Ru - CA^{\ell} \Obs^{\tu{L}}\Tu
\end{bmatrix},~~Z_2 := 
\begin{bmatrix}
   C\Rw - CA^{\ell} \Obs^{\tu{L}}\Tw & Q
\end{bmatrix}\label{eq:pre:matrix_SABZ:Z}.
\end{align}    
\end{subequations}
$\Sb$ is well-defined since $S$ is invertible by Assumption~\ref{ass:S}.
We consider this auxiliary system motivated by the approach in~\cite{de2019formulas,li2024controller}, which we summarize here.
Indeed, the auxiliary system can capture the input-output evolution of the actual system: for each $(\hat{w}, \hat{x})$ and sequence $\{u(k)\}_{k = 0}^{\infty}$, there exists $\hat{\xi}$ such that 
\begin{itemize}[leftmargin=*]
    \item the solution $\big(w(\cdot), x(\cdot)\big)$ to \eqref{eq:sys:w}-\eqref{eq:sys:x} with initial condition $\big(w(0), x(0) \big)= \big(\hat{w}, \hat{x} \big)$ and with input $\{u(k)\}_{k = 0}^{\infty}$,
    \item the corresponding output response $y(\cdot)$ in~\eqref{eq:sys:y},
    \item the solution $\big(\omega(\cdot),\xi(\cdot)\big)$ to \eqref{eq:sys:aux:w}-\eqref{eq:sys:aux:xi} with initial condition $(\omega(\ell), \xi(\ell)) = \big( S^\ell \hat{w}, \hat{\xi} \big)$ 
    and input $\{v(k)\}_{k = \ell}^{\infty} = \{u(k)\}_{k = \ell}^{\infty}$
    \item the corresponding output response $\varphi(\cdot)$ in~\eqref{eq:sys:aux:y}
\end{itemize}
satisfy for each $k \ge \ell$
\begin{align*}
\xi(k) = \big(y(k - \ell), \dots, y(k - 1), u(k - \ell), \dots, u(k - 1) \big) \text{ and }  \varphi(k) = y(k).
\end{align*}
(We will make this claim in Lemma~\ref{lem:rel_sol_sys+exo_auxsys+auxexo}.)
The fact that the $\xi$ above is a stack of $\ell$ past values of the output and the input is why we arranged data as in~\eqref{eq:data} and created stacks of $\ell$ past values of output and input in~\eqref{eq:data:Psi0} and \eqref{eq:data:Psi1}.
Since the auxiliary system can capture the input-output evolution of the actual system in the sense above, we first consider an output regulation problem for~\eqref{eq:sys:aux}; then, based on the solution to that problem and the relation between the evolutions of the two systems, we construct a control law $u$ that solves Problem~\ref{prob:orp_ori}.
The considered output regulation problem for~\eqref{eq:sys:aux} is presented next.

\begin{problem}
\label{prob:orp_aux}
    Find a control law for $v$ in~\eqref{eq:sys:aux} so that
    \begin{itemize}[wide]
        \item [(i)] the resulting closed loop with $\omega = 0$ is globally asymptotically stable;
        \item [(ii)] the resulting closed loop has $\lim_{k \to + \infty} \varphi(k)= 0$ for each initial state.
    \end{itemize}
\end{problem}

To solve Problem \ref{prob:orp_aux}, we consider
\begin{equation}
\label{eq:ctrl:aux}
\mu^+ = \Phi\mu + G \varphi, \quad v = \mathcal{K}\smat{\xi\\ \mu}
\end{equation}
for $\Phi$ and $G$ in~\eqref{eq:im-matrices} and combine it with~\eqref{eq:sys:aux} to obtain the closed-loop
\begingroup
\thinmuskip=1mu plus 1mu minus 1mu
\medmuskip=2mu plus 2mu minus 2mu
\thickmuskip=3mu plus 3mu minus 3mu
\begin{subequations}\label{eq:sys:xi_eta}
    \begin{align}
    \omega^+ & = S \omega \\
    \bmat{\xi^+\\\mu^+}
	& = 
	\bmat{\Ab & 0\\  G Z_{1} & \Phi}
    \bmat{\xi\\ \mu}
    + \bmat{\Bb\\ 0} v 
    + \bmat{\Lb Z_{2} \\ GZ_{2}} \Sb \omega =: \bar{\Ab} \bmat{\xi\\ \mu}
	+ \bar{\Bb} v + \bar{\Pb} \Sb \omega \label{eq:sys:xi_eta:barAbarB} \\
        \varphi & = Z_{1} \xi + Z_{2} \Sb \omega, \quad v = \mathcal{K}\smat{\xi\\ \mu}.\label{eq:sys:xi_eta:phi}
    \end{align}
\end{subequations}
\endgroup

\section{Noise-independent data-based closed-loop representation}
\label{sec:noise_indep_cl}

In this section, we show how the closed loop can be expressed in terms of the available data, which do not include the matrix $W_0$ corresponding to the unmeasurable exosignal.
To this end, it is relevant to characterize the solutions to~\eqref{eq:sys} in the next claim, by adapting \cite[Claim 1]{li2024controller} in the additional presence of the exosystem $w^+ = S w$.

\begin{claim}
\label{claim:1:xy}
    For each $(\hat{w},\hat{x}) \in \mathbb{R}^{n_w} \times \mathbb{R}^n$ and sequence $\{u(k)\}_{k = 0}^{\infty}$, let $(w(\cdot), x(\cdot))$ be the solution to~\eqref{eq:sys:w}-\eqref{eq:sys:x} with initial condition $(w(0), x(0)) = (\hat{w}, \hat{x})$ and input $\{u(k)\}_{k = 0}^{\infty}$, and let $y(\cdot) = C x(\cdot) + Q w(\cdot)$ be the corresponding output response in~\eqref{eq:sys:y}.
    If $(A,C)$ is observable, then, for each $k \ge \ell$,
    \begin{subequations}\label{eq:claim1:yellxy}
        \begin{align}
            &
            \smat{
                y(k - \ell)\\
                \vdotsS\\
                y(k - 1)} = \Obs x(k - \ell) + \Tu 
            \smat{
                u(k - \ell)\\
                \vdotsS\\
                u(k - 1)} + \Tw 
			\smat{
                w(k - \ell)\\
                \vdotsS\\
                w(k - 1)} \label{eq:claim1:yellxy:yell}\\
            & x(k) = A^\ell \Obs^{\tu{L}}
            \smat{
                y(k - \ell)\\
                \vdotsS\\
                y(k - 1)}
                + (\Ru - A^\ell \Obs^{\tu{L}} \Tu)
            \smat{
                u(k - \ell)\\
                \vdotsS\\
                u(k - 1)} 
            \label{eq:claim1:yellxy:x}\\
			& \qquad + (\Rw - A^\ell \Obs^{\tu{L}} \Tw)
            \smat{
                w(k - \ell)\\
                \vdotsS\\
                w(k - 1)}  \notag \\           
            & y(k) = CA^{\ell} \Obs^{\tu{L}}
            \smat{
			y(k-\ell)\\
			\vdotsS\\
			y(k-1)            
            }  + (C\Ru - CA^{\ell} \Obs^{\tu{L}}\Tu)
			\smat{
			u(k-\ell)\\
			\vdotsS \\
			u(k-1)
			} \label{eq:claim1:yellxy:y} \\
			& \qquad + (C\Rw - CA^{\ell} \Obs^{\tu{L}}\Tw)
			\smat{
			w(k-\ell)\\
			\vdotsS \\
			w(k-1)
			}
			+ Q w(k). \notag
        \end{align}
    \end{subequations}
\end{claim}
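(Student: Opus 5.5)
Fix $k \ge \ell$ and set $j := k-\ell \ge 0$. The plan is to unroll the state recursion \eqref{eq:sys:x} from time $j$ over a window of length $\ell$. For each $i \in \{0,\dots,\ell\}$, the variation-of-constants formula for \eqref{eq:sys:x} gives
\begin{equation*}
x(j+i) = A^{i} x(j) + \sum_{r=0}^{i-1} A^{i-1-r}\big(B u(j+r) + P w(j+r)\big),
\end{equation*}
which follows by a straightforward induction on $i$. The exosystem \eqref{eq:sys:w} plays no role beyond supplying the sequence $w(\cdot)$. We therefore treat $w$ as a second exogenous input, entering exactly like $u$ except that it also has the direct feedthrough $Q$ in \eqref{eq:sys:y}.

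\textbf{First identity \eqref{eq:claim1:yellxy:yell}.} For $i = 0,\dots,\ell-1$, multiply the formula above by $C$ and add $Q w(j+i)$:
\begin{equation*}
y(j+i) = C A^{i} x(j) + \sum_{r=0}^{i-1} C A^{i-1-r} B u(j+r) + \sum_{r=0}^{i-1} C A^{i-1-r} P w(j+r) + Q w(j+i).
\end{equation*}
Stacking these $\ell$ rows gives $\Obs x(j)$ as the first term. The $u$-sums produce the block rows of the strictly lower-triangular Toeplitz matrix $\Tu$ in \eqref{eq:pre:matrix:OTR:T_B}. The $w$-terms produce $\Tw$ in \eqref{eq:pre:matrix:OTR:T_P}, whose diagonal blocks $Q$ come from the feedthrough. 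The only care needed here is matching indices: the block in position $(i+1,r+1)$ should be $CA^{i-1-r}B$ for $r<i$, and this must be checked against the displayed matrices.

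\textbf{Second identity \eqref{eq:claim1:yellxy:x}.} Take $i=\ell$ in the unrolled formula and note that $\sum_{r=0}^{\ell-1} A^{\ell-1-r}B u(j+r) = \Ru (u(j),\dots,u(j+\ell-1))$ by \eqref{eq:pre:matrix:OTR:R_B}, and similarly for $\Rw$ by \eqref{eq:pre:matrix:OTR:R_P}. Hence
\begin{equation*}
x(k) = A^\ell x(k-\ell) + \Ru (u(k-\ell),\dots,u(k-1)) + \Rw (w(k-\ell),\dots,w(k-1)).
\end{equation*}
Observability with index $\ell$ gives $\rank \Obs = n$, so the left inverse $\Obs^{\tu{L}}$ exists. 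Applying $\Obs^{\tu{L}}$ to \eqref{eq:claim1:yellxy:yell} yields
\begin{equation*}
x(k-\ell) = \Obs^{\tu{L}}\big((y(k-\ell),\dots,y(k-1)) - \Tu (u(k-\ell),\dots,u(k-1)) - \Tw (w(k-\ell),\dots,w(k-1))\big).
\end{equation*}
Substituting this into the expression for $x(k)$ and collecting terms gives \eqref{eq:claim1:yellxy:x}.

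\textbf{Third identity \eqref{eq:claim1:yellxy:y}.} This follows at once from $y(k) = Cx(k) + Qw(k)$ together with \eqref{eq:claim1:yellxy:x}.

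The argument is elementary. The main point of attention is bookkeeping: the Toeplitz and reachability block orderings must agree with the ordering of the stacked vectors in \eqref{eq:pre:matrix:OTR}, and this is where I would double-check each block.
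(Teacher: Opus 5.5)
Your proposal is correct and follows essentially the same route as the paper: treat $w$ as an additional input, obtain the stacked output relation and the $\ell$-step formula $x(k)=A^\ell x(k-\ell)+\Ru(u(k-\ell),\dots,u(k-1))+\Rw(w(k-\ell),\dots,w(k-1))$ by induction, eliminate $x(k-\ell)$ with $\Obs^{\tu{L}}$, and conclude with $y(k)=Cx(k)+Qw(k)$. Your index bookkeeping for $\Tu$, $\Tw$, $\Ru$, $\Rw$ matches the displayed matrices, and you derive the stacked output relation explicitly where the paper only asserts it.
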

\begin{proof}
See Appendix~\ref{app:claim:1:xy}.
\end{proof}

By Claim~\ref{claim:1:xy}, a key relation for data points is obtained in the next lemma, which is an adaptation of \cite[Lemma 4]{li2024controller}.

\begin{lemma}
\label{lemma:data_rel}
Under Assumptions~\ref{ass:S}-\ref{ass:observ}, data points in~\eqref{eq:data}-\eqref{eq:W0} satisfy
\begin{align}
\label{eq:data-matrices-identity}
\Psi_1 = \bar{\Ab}  \Psi_0 +  \bar{\Bb} U_1 + \bar{\Pb} \Sb W_0.
\end{align}
\end{lemma}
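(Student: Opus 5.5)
The identity \eqref{eq:data-matrices-identity} is linear and holds column by column, so I will fix a generic column index $j \in \{0,\dots,T-\ell\}$ and set $k := \ell + j$. The $j$-th column of $\Psi_0$ is $(\xi_k,\eta(k))$, where
\[
\xi_k := \big(y(k-\ell),\dots,y(k-1),u(k-\ell),\dots,u(k-1)\big).
\]
The $j$-th column of $\Psi_1$ is $(\xi_{k+1},\eta(k+1))$, that of $U_1$ is $u(k)$, and that of $W_0$ is $w(k)$. By the block structure of $\bar{\Ab}$, $\bar{\Bb}$, $\bar{\Pb}$ in \eqref{eq:sys:xi_eta:barAbarB}, it suffices to show two identities:
\[
\xi_{k+1} = \Ab \xi_k + \Bb u(k) + \Lb Z_2 \Sb w(k), \qquad \eta(k+1) = G Z_1 \xi_k + \Phi \eta(k) + G Z_2 \Sb w(k).
\]

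The first step is to rewrite the output $y(k)$. The data come from a solution of \eqref{eq:sys} and the internal model \eqref{eq:internal_model_eta}, and $k \ge \ell$, so Claim~\ref{claim:1:xy} applies; \eqref{eq:claim1:yellxy:y} with the definitions in \eqref{eq:pre:matrix_SABZ:Z} gives
\[
y(k) = Z_1 \xi_k + Z_2\, \big(w(k-\ell),\dots,w(k-1),w(k)\big).
\]
Since $w^+ = Sw$ and $S$ is invertible by Assumption~\ref{ass:S}, we have $w(k-i) = S^{-i} w(k)$ for $i=0,\dots,\ell$. Hence the stacked vector equals $\Sb w(k)$ by \eqref{eq:pre:matrix_SABZ:S}, and
\[
y(k) = Z_1\xi_k + Z_2 \Sb w(k).
\]
I expect this to be the only delicate point, namely that $\Sb$ correctly encodes the backward propagation of $w$. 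The concern is whether the $\ell+1$ blocks of $\Sb$ align with the $\ell$ blocks of $C\Rw - CA^\ell\Obs^{\tu{L}}\Tw$ plus the final $Q$. Both have $\ell+1$ blocks, ordered from $w(k-\ell)$ to $w(k)$, so they match.

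Next comes the shift structure. By construction, $\Fb \xi_k$ shifts the output and input stacks up by one block and places zeros in the last block of each. $\Lb$ inserts a vector into the last output slot, and $\Bb$ inserts a vector into the last input slot. Therefore
\[
\Fb\xi_k + \Lb y(k) + \Bb u(k) = \xi_{k+1}.
\]
Substituting the expression for $y(k)$ and using $\Ab = \Fb + \Lb Z_1$ yields the first identity.

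Finally, \eqref{eq:internal_model_eta} gives $\eta(k+1) = \Phi\eta(k) + G y(k)$, and substituting the same expression for $y(k)$ gives the second identity. Stacking over $j = 0,\dots,T-\ell$ then produces \eqref{eq:data-matrices-identity}.
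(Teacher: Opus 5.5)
Your proposal is correct and follows essentially the same route as the paper. You fix a column index $k \in \{\ell,\dots,T\}$, use \eqref{eq:claim1:yellxy:y} with the definitions of $Z_1$, $Z_2$ and the invertibility of $S$ to write $y(k) = Z_1 \xi_k + Z_2 \Sb w(k)$, and then use the shift structure of $\Fb$, $\Lb$, $\Bb$ and the internal model to express each column of $\Psi_1$ as $\bar{\Ab}$ times the corresponding column of $\Psi_0$ plus $\bar{\Bb}u(k) + \bar{\Pb}\Sb w(k)$ before stacking, exactly as the paper does (your explicit check of the block alignment between $\Sb$ and $Z_2$ is a detail the paper leaves implicit).
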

\begin{proof}
See Appendix~\ref{app:lemma:data_rel}.
\end{proof}

The data matrices $\Psi_1$, $\Psi_0$, $U_1$ are available to the designer, but the matrix $W_0$ is not.
A factorization of $W_0$ is instrumental to obtain a closed-loop representation independent of $W_0$ and is given in the next lemma.

\begin{lemma}
\label{lemma:factorization_W0}
There is a matrix $L_0$ and a \emph{known} matrix $\mathbf{M}\in \R^{n_w \times T-\ell+1}$ such that
\begin{align}
\label{eq:W_0:factor}
W_0 = L_0 \mathbf{M}.
\end{align}
\end{lemma}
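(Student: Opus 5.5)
The idea is to use the fact that the columns of $W_0$ are consecutive samples of the free response of the known exosystem $w^+=Sw$. Since $w(k)=S^{k-\ell}w(\ell)$ for $k\ge \ell$, we have
\begin{align*}
W_0=\smat{w(\ell) & S w(\ell) & \cdots & S^{T-\ell}w(\ell)}.
\end{align*}
The unknown here is the single vector $w(\ell)$, and it enters through powers of $S$. We need to move this unknown to the left and keep a known matrix on the right.

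The route is through a canonical form of $S$. Let $m_S(\lambda)=s_0+\dots+s_{d-1}\lambda^{d-1}+\lambda^d$ be the minimal polynomial; its coefficients are known because $S$ is known. First step: note that every power satisfies $S^j=\sum_{i=0}^{d-1}c_{j,i}S^i$, where the known coefficients $c_{j,i}$ are obtained by reducing $\lambda^j$ modulo $m_S$ (equivalently, $c_j$ comes from powers of the companion matrix of $m_S$). Hence
\begin{align*}
w(\ell+j)=\smat{w(\ell) & Sw(\ell) & \cdots & S^{d-1}w(\ell)}\,c_j,
\end{align*}
which gives $W_0=\tilde L_0\,\tilde{\mathbf M}$. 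Here $\tilde L_0\in\R^{n_w\times d}$ is unknown, and $\tilde{\mathbf M}\in\R^{d\times (T-\ell+1)}$ is known, with $j$-th column $c_j=\Gamma^{j}e_1$ for the companion matrix $\Gamma$ of $m_S$.

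Second step: match the stated size, since $\mathbf M$ is required to be $n_w\times(T-\ell+1)$. Because $d\le n_w$, pad: take $\mathbf M:=\smat{\tilde{\mathbf M}\\ 0_{n_w-d,\,T-\ell+1}}$ and $L_0:=\smat{\tilde L_0 & 0_{n_w,n_w-d}}$. A cleaner alternative yields a nonsingular-structured factorization directly: write $S$ in a known real Jordan or rational canonical form $S=V J V^{-1}$ with $V,J$ computable from $S$. Then $W_0=V\,\mathrm{diag}$-type$(V^{-1}w(\ell))$-arrangement; in the simplest version, $W_0=S^0 w(\ell)\dots$, i.e.\ choose $\mathbf M:=\smat{z & Sz & \cdots & S^{T-\ell}z}$-like known sequences. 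I would go with the companion/padding construction, since its verification is a one-line consequence of $m_S(S)=0$.

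The main obstacle is purely the bookkeeping to ensure $\mathbf M$ does not depend on $w(\ell)$. The Cayley--Hamilton reduction guarantees this, because the coefficients $c_{j,i}$ depend only on $S$. Assumption~\ref{ass:S} is not needed here beyond the earlier use of invertibility.
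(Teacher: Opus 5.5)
Your proof is correct, and it takes a genuinely different route from the paper. The paper goes through the Jordan form of $S$. It computes $J_k(\mu)^t$ by induction (the binomial-coefficient formula) and realifies each complex-conjugate pair of blocks by two similarity transformations. This gives $w(t)=\tilde{\mathcal T}L(\hat w)M(t)$, where $M(t)$ stacks the modes $\binom{t}{q}\lambda_i^{t-q}$, $\binom{t}{q}\rho_i^{t-q}\cos(\theta_i(t-q))$ and $\binom{t}{q}\rho_i^{t-q}\sin(\theta_i(t-q))$; then $\mathbf M=[\,M(\ell)\ \cdots\ M(T)\,]$ and $L_0=\tilde{\mathcal T}L(w(0))$.

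You instead reduce $\lambda^j$ modulo $m_S$. The known factor collects the coefficient vectors $c_j$, and the unknown factor is the Krylov matrix $[\,w(\ell)\ \ Sw(\ell)\ \cdots\ S^{d-1}w(\ell)\,]$; everything rests on $m_S(S)=0$ and $d\le n_w$. Your argument is much shorter. It needs no eigenvalues or Jordan structure of $S$, only $s_0,\dots,s_{d-1}$, which the paper already uses to build $\Phi$ in \eqref{eq:im-matrices}. It also never uses invertibility of $S$.

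Moreover, the unpadded $\tilde{\mathbf M}$ has full row rank whenever $T-\ell+1\ge d$, since its first $d$ columns are $e_1,\dots,e_d$. So it can be used directly as $\hat{\mathbf M}$, with $\hat n_w=d$, in Lemma~\ref{lemma:closed_loop_rep} and Theorem~\ref{thm:outputregulation:aux}, without the row pruning described after Proposition~\ref{proposition:ex_remark}. Incidentally, $\tilde{\mathbf M}=[\,e_1\ \ \Gamma e_1\ \cdots\ \Gamma^{T-\ell}e_1\,]$ has the Krylov structure of $\newM$ in Lemma~\ref{lemma:factorization_W0_new}, but for the always-controllable pair $(\Gamma,e_1)$. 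Unlike that lemma, your argument requires neither a cyclic $S$ nor $T-\ell+1\ge n_w$.

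The paper's route, in exchange, expresses $\mathbf M$ through explicit and interpretable time functions: the modes of the exosystem, e.g., the cosine and sine rows in Section~\ref{sec:numerical_example}. It also mirrors the continuous-time construction of \cite{hu2024dd-output-regulation}. Nothing is lost with your choice. Under Assumption~\ref{ass:S}, the row space of the paper's $\mathbf M$ and that of yours (padded or not) both coincide with the set of length-$(T-\ell+1)$ sequences satisfying the linear recurrence with characteristic polynomial $m_S$. Hence the two matrices impose the same constraint in \eqref{eq:sdp:mathcal_Y}.

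Two small points. First, the identity $c_j=\Gamma^j e_1$ holds for the companion matrix with ones on the subdiagonal and $-(s_0,\dots,s_{d-1})$ in the last column, i.e., the transpose of $\Phi$ for $p=1$, and not for the other usual convention. Your primary definition of $c_j$ via polynomial division is unambiguous anyway. Second, you should delete the ``cleaner alternative'' paragraph. As written it is not an argument, and a known matrix of the form $[\,z\ \ Sz\ \cdots\ S^{T-\ell}z\,]$ would need an extra hypothesis on $z$, such as the one in Lemma~\ref{lemma:factorization_W0_new}.
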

\begin{proof}
See Appendix~\ref{app:lemma:factorization_W0}.
\end{proof}
In the proof in Appendix~\ref{app:lemma:factorization_W0}, the construction of $\mathbf{M}$ is detailed.

A factorization of $W_0$ as in~\eqref{eq:W_0:factor} can be alternatively obtained under an additional assumption on $S$ and $T$ as in \cite{mao2025one}; we report it in the next lemma  for self-containedness.

\begin{lemma}
\label{lemma:factorization_W0_new}
Find $w_\star$ such that the pair $(S,w_\star)$ is controllable, i.e.,
\begin{align*}
\bmat{ w_\star & S w_\star & \dots & S^{T-\ell} w_\star} =: \newM \in \R^{n_w \times T-\ell+1}
\end{align*}
has full row rank.
Then, there is a matrix $\mathcal{L}$ such that
\begin{align}
\label{factorization_W0_new}
W_0 = \mathcal{L} \newM.
\end{align}
\end{lemma}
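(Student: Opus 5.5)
The plan is to exploit that the columns of $W_0$ are consecutive samples of a free response of the exosystem. Since $w(k+1)=Sw(k)$ for all $k$, we have $w(\ell+j)=S^j w(\ell)$ for $j=0,\dots,T-\ell$. Hence
\begin{align*}
W_0=\bmat{w(\ell) & S w(\ell) & \dots & S^{T-\ell} w(\ell)}.
\end{align*}
The goal is to find an $n_w\times n_w$ matrix $\mathcal{L}$, depending on the unknown $w(\ell)$ but not on $j$, such that $S^j w(\ell)=\mathcal{L}S^j w_\star$ for every $j$. Stacking these identities column by column then gives $W_0=\mathcal{L}\newM$.

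The natural candidate is a matrix $\mathcal{L}$ that commutes with $S$ and maps $w_\star$ to $w(\ell)$. If $\mathcal{L}S=S\mathcal{L}$ and $\mathcal{L}w_\star=w(\ell)$, then $\mathcal{L}S^jw_\star=S^j\mathcal{L}w_\star=S^jw(\ell)$, as needed.

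To build $\mathcal{L}$, use controllability of $(S,w_\star)$. Note that $\newM$ having full row rank forces $T-\ell+1\ge n_w$, so the square Krylov matrix $K:=\bmat{w_\star & Sw_\star & \dots & S^{n_w-1}w_\star}$ is invertible. Indeed, by Cayley--Hamilton, the rank of $\newM$ equals the rank of $K$. Hence $w_\star$ is a cyclic vector, and $\{S^iw_\star\}_{i=0}^{n_w-1}$ is a basis of $\R^{n_w}$. Expand $w(\ell)=\sum_{i=0}^{n_w-1}c_iS^iw_\star$, with coefficients $c=K^{-1}w(\ell)$, and set $\mathcal{L}:=\sum_{i=0}^{n_w-1}c_iS^i$. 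This is a polynomial in $S$, so it commutes with $S$, and by construction $\mathcal{L}w_\star=w(\ell)$.

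The only step needing care is the invertibility of $K$ from the full-row-rank hypothesis on $\newM$, which is handled by Cayley--Hamilton. The rest is direct.

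Alternatively, $\mathcal{L}$ can be obtained as $W_0\newM^{\dagger}$, with $\newM^{\dagger}$ a right inverse of $\newM$. This only shows existence once the commuting argument guarantees that $W_0$ lies in the row space of $\newM$, so the polynomial construction above is the one I would present. Assumption~\ref{ass:S} is not needed here.
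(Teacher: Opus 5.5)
Your proof is correct and is essentially the paper's. There, too, $\mathcal{L}$ is a polynomial in $S$, namely $\rho(S)=\sum_{i=0}^{T-\ell}\mathsf{w}_{i+1}S^i$ with $w(\ell)=\newM\mathsf{w}$, which commutes with $S$ and maps $w_\star$ to $w(\ell)$, giving $W_0=\rho(S)\newM$; the only difference is that the paper takes the coefficients directly from the full row rank of $\newM$, so your Cayley--Hamilton reduction to the square Krylov matrix $K$ is valid but not needed.
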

\begin{proof}
See Appendix~\ref{app:lemma:factorization_W0_new}.
\end{proof}

The hypothesis that $\newM$ has full row rank implies that $T-\ell+1 \ge n_w$, which is not required by Lemma~\ref{lemma:factorization_W0}.

Whenever a factorization of $W_0$ as in Lemma~\ref{lemma:factorization_W0} or \ref{lemma:factorization_W0_new} is available, the next result applies.

\begin{lemma}
\label{lemma:closed_loop_rep}
Let a matrix $\hat{L}_0$ and a \emph{known} matrix $\hat{\mathbf{M}} \in \R^{\hat{n}_w \times T-\ell+1}$ be such that $W_0 = \hat{L}_0 \hat{\mathbf{M}}$.
If there are matrices $\mathcal{K} \in \mathbb{R}^{m \times (m+p)\ell + pd}$ and $\mathcal{G} \in \mathbb{R}^{T - \ell + 1 \times (m+p)\ell + p d}$ such that
\begin{equation}
\label{eq:mathcal_G}
\bmat{\mathcal{K}\\
        I \\
        0_{\hat{n}_w,(m+p)\ell + pd}} 
= \bmat{U_1\\
        \Psi_0\\
        \hat{\mathbf{M}}} \mathcal{G},
\end{equation}
then $\bar{\Ab} +  \bar{\Bb}\mathcal{K} = \Psi_1 \mathcal{G}$.
\end{lemma}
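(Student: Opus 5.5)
The plan is a direct algebraic computation that starts from the data identity of Lemma~\ref{lemma:data_rel} and uses the three block rows of~\eqref{eq:mathcal_G}. Right-multiplying~\eqref{eq:data-matrices-identity} by $\mathcal{G}$ gives
\begin{align*}
\Psi_1 \mathcal{G} = \bar{\Ab} \Psi_0 \mathcal{G} + \bar{\Bb} U_1 \mathcal{G} + \bar{\Pb} \Sb W_0 \mathcal{G}.
\end{align*}
Lemma~\ref{lemma:data_rel} requires Assumptions~\ref{ass:S}-\ref{ass:observ}. I will take them as standing assumptions here, since $\Sb$ and $Z_1,Z_2$, hence $\bar{\Ab},\bar{\Pb}$, are defined only under them.

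Next I read off the three block rows of~\eqref{eq:mathcal_G}: $U_1\mathcal{G}=\mathcal{K}$, $\Psi_0\mathcal{G}=I$, and $\hat{\mathbf{M}}\mathcal{G}=0_{\hat{n}_w,(m+p)\ell+pd}$. Before using them, I check dimensions. $\Psi_0$ has $p\ell+m\ell+pd$ rows, since $\eta$ has dimension $pd$, and $T-\ell+1$ columns. So the identity block is $I_{(m+p)\ell+pd}$, which matches the size of $\bar{\Ab}$, and $\bar{\Bb}\mathcal{K}$ has the same size.

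Substituting the first two rows gives $\bar{\Ab}\Psi_0\mathcal{G}=\bar{\Ab}$ and $\bar{\Bb}U_1\mathcal{G}=\bar{\Bb}\mathcal{K}$.

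The only step that uses the factorization is removing the unknown exosignal term. Writing $W_0=\hat{L}_0\hat{\mathbf{M}}$ gives $\bar{\Pb}\Sb W_0\mathcal{G}=\bar{\Pb}\Sb\hat{L}_0\hat{\mathbf{M}}\mathcal{G}=0$ by the third block row. Combining the three pieces yields $\Psi_1\mathcal{G}=\bar{\Ab}+\bar{\Bb}\mathcal{K}$.

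No real obstacle is expected. The point to emphasize is that $\hat{L}_0$ never needs to be known, since it is annihilated through the known $\hat{\mathbf{M}}$.
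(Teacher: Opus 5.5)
Your proposal is correct and is essentially the paper's proof: substitute $W_0=\hat{L}_0\hat{\mathbf{M}}$ into the identity of Lemma~\ref{lemma:data_rel}, right-multiply by $\mathcal{G}$, and use the three block rows of~\eqref{eq:mathcal_G}; the paper simply groups these rows as $\bmat{\bar{\Bb} & \bar{\Ab} & \bar{\Pb}\Sb\hat{L}_0}\smat{U_1\\ \Psi_0\\ \hat{\mathbf{M}}}\mathcal{G}$. You are also right that Assumptions~\ref{ass:S}--\ref{ass:observ} are implicitly needed through Lemma~\ref{lemma:data_rel}.
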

\begin{proof}
See Appendix~\ref{app:lemma:closed_loop_rep}.
\end{proof}

As a consequence of Lemma~\ref{lemma:closed_loop_rep}, \eqref{eq:sys:xi_eta:barAbarB} and $v = \mathcal{K}\smat{\xi\\ \mu}$ yield the closed-loop dynamics
\begin{align*}
\smat{\xi^+\\ \mu^+} 
= (\bar{\Ab} +  \bar{\Bb}\mathcal{K}) \smat{\xi\\ \mu} + \bar{\Pb} \Sb \omega 
= \Psi_1 \mathcal{G}  \smat{\xi\\ \mu} + \bar{\Pb} \Sb \omega .
\end{align*}
The factorization $W_0 = \hat{L}_0 \hat{\mathbf{M}}$ applies to $W_0 = L_0 \mathbf{M}$ as per Lemma~\ref{lemma:factorization_W0} and to $W_0 = \mathcal{L} \mathcal{M}$ as per Lemma~\ref{lemma:factorization_W0_new}.
Then, because of the equation $0 = \hat{\mathbf{M}} \mathcal{G}$ in~\eqref{eq:mathcal_G}, Lemma~\ref{lemma:closed_loop_rep} is a noise-filtering lemma as it enables writing the closed-loop as $\bar{\Ab} +  \bar{\Bb}\mathcal{K} = \Psi_1 \mathcal{G}$, which is independent of the noise $W_0$ affecting data.

\section{Data-driven output regulation of auxiliary and actual systems}
\label{sec:output_reg_aux_act_sys}

Within this section, we solve the considered output regulation problem via input-output data.
In Section~\ref{sec:output_reg_aux_sys}, we address Problem~\ref{prob:orp_aux} given on the auxiliary system and in Section~\ref{sec:ddctrl:sys-auxsys} we address Problem~\ref{prob:orp_ori} on the actual system.

\subsection{Data-driven output regulation of the auxiliary system}
\label{sec:output_reg_aux_sys}

In the next theorem we propose a semidefinite program (SDP) to solve the output regulation problem for the auxiliary system \eqref{eq:sys:aux} using the input-output data in~\eqref{eq:data} collected from \eqref{eq:sys}-\eqref{eq:internal_model_eta}, as motivated in Section~\ref{sec:problem}.

\begin{theorem}
\label{thm:outputregulation:aux}
Let Assumptions \ref{ass:S}-\ref{ass:observ} hold, and a matrix $\hat{L}_0$ and a \emph{known} matrix $\hat{\mathbf{M}} \in \R^{\hat{n}_w \times T-\ell+1}$ be such that $W_0 = \hat{L}_0 \hat{\mathbf{M}}$.
Consider the SDP in the decision variables $\mathcal{X} \in \mathbb{S}^{(m+p)\ell + pd }$, $\mathcal{Y} \in \R^{T - \ell +1 \times (m+p)\ell + pd}$%
\begin{subequations}\label{eq:sdp}
\begin{align}
&\mathcal{X} \succ 0 , \label{eq:sdp:x>0}\\
& \bmat{
			\mathcal{X}\\
			0_{\hat{n}_w, (m+p)\ell + pd}
}=
\bmat{
\Psi_0\\ 
\hat{\mathbf{M}}
}\mathcal{Y} ,
\label{eq:sdp:mathcal_Y}\\
& \bmat{
\mathcal{X} &  \Psi_1 \mathcal{Y}\\
(\Psi_1 \mathcal{Y})^\top & \mathcal{X} 
} \succ 0. \label{eq:sdp:schur}
\end{align}
\end{subequations}
If \eqref{eq:sdp} is feasible, the control law \eqref{eq:ctrl:aux} with $\Phi$ and $G$ as in \eqref{eq:im-matrices} and
\begin{equation} \label{eq:K}
    \mathcal{K} := U_1 \mathcal{Y} \mathcal{X}^{-1}
\end{equation}
solves the output regulation problem for~\eqref{eq:sys:aux}, that is: the closed loop of \eqref{eq:sys:aux} and \eqref{eq:ctrl:aux} given in~\eqref{eq:sys:xi_eta} (i)~is globally asymptotically stable with $\omega=0$ and (ii)~has $\lim_{k\to\infty} \varphi(k) = 0$ for each initial condition $(\omega(0),\xi(0),\mu(0))$.
\end{theorem}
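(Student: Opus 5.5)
Set $\mathcal{G} := \mathcal{Y}\mathcal{X}^{-1}$, which is well defined since $\mathcal{X} \succ 0$ by \eqref{eq:sdp:x>0}.

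\textbf{Step 1: data-based closed-loop matrix.} Right-multiplying \eqref{eq:sdp:mathcal_Y} by $\mathcal{X}^{-1}$ gives $\Psi_0 \mathcal{G} = I$ and $\hat{\mathbf{M}}\mathcal{G} = 0$. Definition \eqref{eq:K} gives $U_1\mathcal{G} = \mathcal{K}$. Hence \eqref{eq:mathcal_G} holds, and Lemma~\ref{lemma:closed_loop_rep} yields $\bar{\Ab}+\bar{\Bb}\mathcal{K} = \Psi_1\mathcal{G}$.

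\textbf{Step 2: Schur stability.} Apply a Schur complement to \eqref{eq:sdp:schur} and use $\Psi_1\mathcal{Y} = \Psi_1\mathcal{G}\mathcal{X}$. This gives $\mathcal{X} - \Psi_1\mathcal{G}\mathcal{X}\mathcal{X}^{-1}\mathcal{X}\mathcal{G}^\top\Psi_1^\top \succ 0$. Multiplying on both sides by $\mathcal{X}^{-1}$ and setting $\mathcal{A}_{\rm cl} := \Psi_1 \mathcal{G}$, we get
\[
\mathcal{X}^{-1} - \mathcal{A}_{\rm cl}^\top \mathcal{X}^{-1}\mathcal{A}_{\rm cl} \succ 0 .
\]
So $V(z) = z^\top \mathcal{X}^{-1} z$ is a Lyapunov function for $z^+ = \mathcal{A}_{\rm cl} z$, and $\mathcal{A}_{\rm cl}$ is Schur. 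With $\omega = 0$, the closed loop \eqref{eq:sys:xi_eta} reduces to $(\xi,\mu)^+ = \mathcal{A}_{\rm cl}(\xi,\mu)$, which proves item (i).

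\textbf{Step 3: regulation, the main step.} Here I use a Sylvester-equation argument. Since $\mathcal{A}_{\rm cl}$ is Schur and all eigenvalues of $S$ satisfy $|z|\ge 1$ by Assumption~\ref{ass:S}, the spectra of $\mathcal{A}_{\rm cl}$ and $S$ are disjoint. Therefore there is a unique $\Pi$ with
\[
\Pi S = \mathcal{A}_{\rm cl}\Pi + \bar{\Pb}\Sb .
\]
Define $e := (\xi,\mu) - \Pi\omega$. Then $e^+ = \mathcal{A}_{\rm cl} e$, so $e(k)\to 0$. Write $\Pi = \smat{\Pi_\xi\\ \Pi_\mu}$. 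Then $\varphi = Z_1\xi + Z_2\Sb\omega = Z_1 e_\xi + (Z_1\Pi_\xi + Z_2\Sb)\omega$. It therefore suffices to show $\Theta := Z_1\Pi_\xi + Z_2\Sb = 0$.

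The $\mu$-block of the Sylvester equation reads $\Pi_\mu S = \Phi\Pi_\mu + G\Theta$, using $\mathcal{A}_{\rm cl} = \bar{\Ab}+\bar{\Bb}\mathcal{K}$ with $\bar{\Bb}$ having zero $\mu$-block. This is the internal model principle. Partition $\Pi_\mu$ into $p$-row blocks $\Pi_1,\dots,\Pi_d$. The shift structure of $\Phi$ gives $\Pi_{j+1} = \Pi_j S$, hence $\Pi_j = \Pi_1 S^{j-1}$. The last block row then gives
\[
\Pi_1 S^d = -\sum_{i=0}^{d-1} s_i \Pi_1 S^i + \Theta ,
\]
so $\Theta = \Pi_1 m_S(S) = 0$ by the definition of the minimal polynomial.

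Consequently $\varphi(k) = Z_1 e_\xi(k) \to 0$ for every initial condition, which proves item (ii). The main obstacle is this last step: it requires carefully checking that the $\mu$-row of $\mathcal{A}_{\rm cl}$ is exactly $[GZ_1\;\;\Phi]$, unaffected by $\mathcal{K}$, so that the Sylvester equation produces $G\Theta$ rather than some other term.
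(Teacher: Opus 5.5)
Your proposal is correct and follows the paper's proof essentially step by step: $\mathcal{G}=\mathcal{Y}\mathcal{X}^{-1}$ with Lemma~\ref{lemma:closed_loop_rep} gives $\bar{\Ab}+\bar{\Bb}\mathcal{K}=\Psi_1\mathcal{G}$, Schur stability follows from the LMI, and the Sylvester equation plus the minimal-polynomial argument on its $\mu$-block gives $Z_1\Pi_\xi+Z_2\Sb=0$. The only slip is in Step~2: congruence with $\mathcal{X}^{-1}$ does not turn $\mathcal{X}-\mathcal{A}_{\rm cl}\mathcal{X}\mathcal{A}_{\rm cl}^\top\succ 0$ into $\mathcal{X}^{-1}-\mathcal{A}_{\rm cl}^\top\mathcal{X}^{-1}\mathcal{A}_{\rm cl}\succ 0$, so either take the Schur complement of the $(1,1)$ block instead, $\mathcal{X}-\mathcal{X}\mathcal{A}_{\rm cl}^\top\mathcal{X}^{-1}\mathcal{A}_{\rm cl}\mathcal{X}\succ 0$, or note directly that $\mathcal{A}_{\rm cl}\mathcal{X}\mathcal{A}_{\rm cl}^\top\prec\mathcal{X}$ already makes $\mathcal{A}_{\rm cl}^\top$, hence $\mathcal{A}_{\rm cl}$, Schur.
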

\begin{proof}
See Appendix~\ref{app:thm:outputregulation:aux}.
\end{proof}

This theorem corresponds to the solution to Problem~\ref{prob:orp_aux}.
It is worth remarking that due to the noise filtering property provided in Lemma \ref{lemma:closed_loop_rep}, no assumptions on the magnitude of the exosignal are imposed in contrast to the assumption of an energy bound on the disturbance as in~\cite[Eq.~(18)]{li2024controller}.
Finally, both factorizations of Lemma~\ref{lemma:factorization_W0} and Lemma~\ref{lemma:factorization_W0_new} can be used with this theorem, specifically in~\eqref{eq:sdp}.

\begin{remark}
A converse of the implication in Theorem~\ref{thm:outputregulation:aux} holds under additional assumptions, besides Assumptions~\ref{ass:S}-\ref{ass:observ} and the existence of $\hat{L}_0$ and $\hat{\mathbf{M}}$ such that $W_0 = \hat{L}_0 \hat{\mathbf{M}}$.
Namely, assume also that (i)~$\smat{ U_1 \\ \Psi_0\\ \hat{\mathbf{M}} }$ has full row rank and (ii)~$\smat{\Ab - \lambda I & \Bb\\ Z_1 & 0}$ has full row rank for all $\lambda$ in the spectrum of $S$; then, if the output regulation problem for system \eqref{eq:sys:aux} is solvable, there exist $\mathcal{X}$ and $\mathcal{Y}$ satisfying \eqref{eq:sdp}.
Condition (ii) is known as nonresonance condition in the output regulation literature and involves the unknown $Z_1$, see~\eqref{eq:pre:matrix_SABZ}.
Let us sketch how this converse implication can be shown.
If the output regulation problem for~\eqref{eq:sys:aux} is solvable, then the triplet $(\Ab, \Bb,\smat{Z_{1}\\ I})$ is stabilizable and detectable, by adapting the arguments of \cite[Proposition 4.2]{isidori2017lectures} given for continuous time and exosystem with eigenvalues on the imaginary axis.
By stabilizability of $\left(\Ab, \Bb\right)$ and condition~(ii), the pair $(\bar{\Ab}, \bar{\Bb}) = \left( \smat{\Ab & 0\\ \mathbf{G} Z_1 & \Phi}, \smat{\Bb\\ 0}\right)$ is stabilizable by following the same steps as \cite[Lemma 4.2 and its proof]{isidori2017lectures}.
By stabilizability of $(\bar{\Ab},\bar{\Bb})$, there exists $\mathcal{K}$ such that $\bar{\Ab} + \bar{\Bb} \mathcal{K}$ is Schur.
For this $\mathcal{K}$, there exists $\mathcal{G}$ such that%
\begin{align}
\label{conseq_full_row_rank}
\smat{\mathcal{K}\\
I\\
0_{\hat{n}_w, (m+p)\ell + pd}}
= \smat{         
U_1\\
\Psi_0\\
\hat{\mathbf{M}}
}\mathcal{G}
\end{align}
by condition~(i); this and the existence of $\hat{L}_0$ and $\hat{\mathbf{M}}$ such that $W_0 = \hat{L}_0 \hat{\mathbf{M}}$ ensure $\bar{\Ab} + \bar{\Bb} \mathcal{K} = \Psi_1 \mathcal{G}$ by Lemma~\ref{lemma:closed_loop_rep} and, thus, $\Psi_1 \mathcal{G}$ Schur.
Then, there exist $\mathcal{X}$ and $\mathcal{Y} = \mathcal{G}\mathcal{X}$ such that \eqref{eq:sdp:x>0} and \eqref{eq:sdp:schur} hold; \eqref{eq:sdp:mathcal_Y} holds by the last two block rows of \eqref{conseq_full_row_rank}.
\end{remark}

A relevant implication regarding~\eqref{eq:sdp} is contained in the next result.

\begin{proposition}
\label{proposition:ex_remark}
Suppose Assumptions~\ref{ass:S}-\ref{ass:observ} hold, and suppose there is a matrix $\hat{L}_0$ and a \emph{known} matrix $\hat{\mathbf{M}} \in \R^{\hat{n}_w \times T-\ell+1}$ such that $W_0 = \hat{L}_0 \hat{\mathbf{M}}$. If $\hat{\mathbf{M}}$ is full row rank and $p \ell > n$, then \eqref{eq:sdp} is infeasible.
\end{proposition}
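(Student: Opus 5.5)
The plan is to exhibit a nonzero vector in the left kernel of $\Psi_0\mathcal{Y}$. By \eqref{eq:sdp:mathcal_Y} this matrix equals $\mathcal{X}$, and \eqref{eq:sdp:x>0} requires $\mathcal{X}\succ 0$, so such a vector gives a contradiction. The vector comes from redundancy in the output data: since $\Obs\in\R^{p\ell\times n}$ and $p\ell>n$, the left kernel of $\Obs$ is nontrivial. I will fix $z\in\R^{p\ell}$ with $z\neq 0$ and $z^\top \Obs=0$.

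\emph{Step 1 (column relations).} Index the columns of $\Psi_0$ by $k=\ell,\dots,T$. The $k$-th column has $y$-block $(y(k-\ell),\dots,y(k-1))$, $u$-block $(u(k-\ell),\dots,u(k-1))$ and $\eta$-block $\eta(k)$. The corresponding column of $W_0$ is $w(k)$.

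Apply \eqref{eq:claim1:yellxy:yell} of Claim~\ref{claim:1:xy} to each such column. Since $S$ is invertible (Assumption~\ref{ass:S}) and $w^+=Sw$, we have $w(k-i)=S^{-i}w(k)$. Hence $(w(k-\ell),\dots,w(k-1))=N w(k)$, where $N:=\smat{S^{-\ell}\\ \vdotsS\\ S^{-1}}$ consists of the first $\ell$ block rows of $\Sb$. Stacking over $k$ gives
\begin{align*}
Y_0 = \Obs X_0 + \Tu U_0 + \Tw N W_0 ,
\end{align*}
where $Y_0$ and $U_0$ are the $y$- and $u$-blocks of $\Psi_0$, and $X_0:=\smat{x(0) & \dots & x(T-\ell)}$.

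\emph{Step 2 (left multiplication).} Define the row vector $r^\top:=\smat{z^\top & -z^\top\Tu & 0_{1,pd}}$. It is nonzero because $z\neq 0$. Multiplying the relation of Step~1 by $z^\top$ eliminates $X_0$, and we obtain
\begin{align*}
r^\top\Psi_0 = z^\top\Tw N W_0 = z^\top \Tw N \hat L_0 \hat{\mathbf{M}}.
\end{align*}

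\emph{Step 3 (contradiction).} Suppose \eqref{eq:sdp} is feasible, and multiply the identity of Step~2 on the right by $\mathcal{Y}$. By \eqref{eq:sdp:mathcal_Y}, $\Psi_0\mathcal{Y}=\mathcal{X}$ and $\hat{\mathbf{M}}\mathcal{Y}=0$. This yields $r^\top\mathcal{X}=0$, hence $r^\top\mathcal{X}r=0$ with $r\neq 0$, which contradicts \eqref{eq:sdp:x>0}.

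The only delicate step is Step~1. It needs the index bookkeeping that aligns the $k$-th column of $\Psi_0$ with $w(k)$ in $W_0$, and the replacement of past exosignal values by $S^{-i}w(k)$, which relies on Assumption~\ref{ass:S}.

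I expect the full row rank of $\hat{\mathbf{M}}$ not to be needed in this argument. The factorization $W_0=\hat L_0\hat{\mathbf{M}}$ together with $\hat{\mathbf{M}}\mathcal{Y}=0$ already gives $W_0\mathcal{Y}=0$. If the authors intend a different route, for example one where $\hat L_0$ must be uniquely determined, that hypothesis would enter there. Otherwise I will simply remark that it is superfluous.
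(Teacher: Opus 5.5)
Your proof is correct. It rests on the same identity as the paper's: the paper also stacks \eqref{eq:claim1:yellxy:yell} over the data columns, with $\mathcal{S}_S=\smat{S^{-\ell}\\ \vdotsS\\ S^{-1}}$ playing the role of your $N$, to factor $\smat{\Psi_0\\ \hat{\mathbf{M}}}=\mathcal{N}_1\mathcal{N}_2$.

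The two arguments differ in how the row dependence is obtained. The paper argues by counting. $\mathcal{N}_1$ has $p\ell-n$ fewer columns than $\smat{\Psi_0\\ \hat{\mathbf{M}}}$ has rows, so the latter is rank deficient. The paper then needs $\hat{\mathbf{M}}$ to have full row rank to ensure that the dependence genuinely involves rows of $\Psi_0$. This yields a strict subset $\Psi_{0,\rm row}$ that generates the remaining rows modulo $\hat{\mathbf{M}}$. The contradiction then comes from $I=\mathcal{S}_{\Psi_0}\Psi_{0,\rm row}\mathcal{Y}\mathcal{X}^{-1}$ via a rank inequality.

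You instead write the dependence down explicitly as $r^\top=\smat{z^\top & -z^\top\Tu & 0}$ with $z^\top\Obs=0$. Its $\Psi_0$-component is nonzero by construction, and you conclude from $\mathcal{X}r=0$. This is why the full-row-rank hypothesis drops out of your argument, and your remark that it is superfluous is correct. In the counting route it only serves to rule out a dependence confined to the rows of $\hat{\mathbf{M}}$. It is also without loss of generality, since deleting dependent rows of $\hat{\mathbf{M}}$ does not change the feasible set of \eqref{eq:sdp}.

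Your version is therefore constructive and slightly more general. The paper's version frames $p\ell>n$ as a dimension mismatch in a factorization of the data, which connects to the full-row-rank data condition in the Remark preceding the proposition. Your vector, completed by $-\hat L_0^\top N^\top\Tw^\top z$ in the $\hat{\mathbf{M}}$-block, certifies the same rank deficiency of $\smat{\Psi_0\\ \hat{\mathbf{M}}}$.
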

\begin{proof}
See Appendix~\ref{app:proposition:ex_remark}.
\end{proof}

Assumption~\ref{ass:observ} implies $p \ell \ge n$.
Proposition~\ref{proposition:ex_remark} establishes that whenever $W_0$ can be factorized as $W_0 = \hat{L}_0 \hat{\mathbf{M}}$ with $\hat{\mathbf{M}}$ full row rank, then $p \ell = n$ is a \emph{necessary} condition for~\eqref{eq:sdp} to be feasible.
This applies to~\cite{mao2025one}, where $W_0 = \mathcal{L} \mathcal{M}$ with $\mathcal{M}$ full row rank, see Lemma~\ref{lemma:factorization_W0_new}.
In practice, this also applies to the proposed factorization $W_0 = L_0 \mathbf{M}$ in Lemma~\ref{lemma:factorization_W0}.
Indeed, although $\mathbf{M}$ does not have full row rank in general, one can always obtain a matrix $\hat{\mathbf{M}}$ with full row rank from $\mathbf{M}$ by removing all linearly dependent rows from $\mathbf{M}$ and write $\mathbf{M} = \mathcal{S}_{\mathbf{M}} \hat{\mathbf{M}}$ for some matrix $\mathcal{S}_{\mathbf{M}}$ so that $W_0 = L_0 \mathbf{M} = (L_0 \mathcal{S}_{\mathbf{M}} ) \cdot \hat{\mathbf{M}}$.
The relevance of the condition $p \ell = n$ was already pointed out in~\cite{li2024controller} for the stabilization problem with input-output data. Removing this condition remains to be investigated in view of the results of \cite{alsalti2025notes,li2024controller,lee2025input}.

\subsection{From output regulation of the auxiliary system to that of the actual system}
\label{sec:ddctrl:sys-auxsys}

\begin{figure}
	\centerline{\includegraphics[scale=.7]{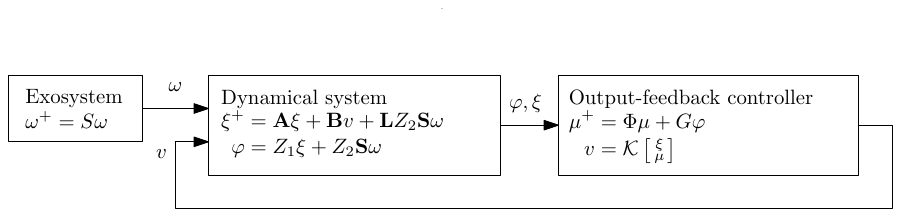}}
	\caption{Plant in \eqref{eq:sys:aux} with dynamical controller in  \eqref{eq:ctrl:aux}.}
	\label{fig:aug_sys}
\end{figure}

Theorem \ref{thm:outputregulation:aux} solves the output regulation problem for~\eqref{eq:sys:aux}.
In order that the controller gain $\mathcal{K}$ tuned by Theorem \ref{thm:outputregulation:aux} solves the output regulation problem for~\eqref{eq:sys}, we need to investigate the relation between the solutions to~\eqref{eq:sys:aux} and to~\eqref{eq:sys}, once the controller gain $\mathcal{K}$ is used in feedback.
More precisely, the feedback controller used in combination with \eqref{eq:sys:aux} is \eqref{eq:ctrl:aux}, see Figure~\ref{fig:aug_sys}, and that used in combination with~\eqref{eq:sys} is
\begin{subequations}\label{eq:pre:ctrl_sys}
\begin{align}
          \chi^+ &= \Fb \chi + \Lb y + \Bb u\label{eq:pre:ctrl_sys:chi}\\
          \eta^+ &= \Phi \eta + G y\label{eq:pre:ctrl_sys:eta}\\
         u &= \mathcal{K} \smat{\chi \\ \eta},
     \end{align}
\end{subequations}
see Figure~\ref{fig:plant_dyn_ctrl}.
\begin{figure}
	\centerline{\includegraphics[scale=.7]{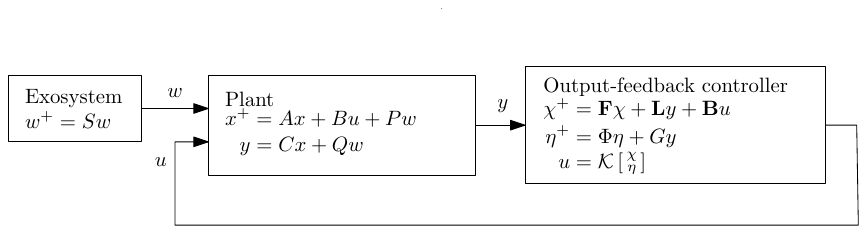}}
	\caption{Plant and output-feedback controller.}
	\label{fig:plant_dyn_ctrl}
\end{figure}
Let us equivalently rearrange the scheme in Figure~\ref{fig:plant_dyn_ctrl} as
in Figure~\ref{fig:ori_sys}.
To establish the desired relation between the solutions of the closed-loop \eqref{eq:sys:aux}, \eqref{eq:ctrl:aux} in Figure~\ref{fig:aug_sys} and those of the closed-loop \eqref{eq:sys}, \eqref{eq:pre:ctrl_sys} in Figure~\ref{fig:ori_sys}, we first single out the equations
\begin{subequations}\label{eq:aux_all_except_control}
\begin{align}
\omega^+ &= S \omega \label{eq:aux_all_except_control:omega}\\
\xi^+ & = \Ab \xi + \Bb v  + \Lb Z_{2} \Sb \omega \label{eq:aux_all_except_control:xi}\\
\varphi &= Z_1 \xi + Z_2 \Sb \omega, \label{eq:aux_all_except_control:varphi}
\end{align}
\end{subequations}
which appear to the left of Figure~\ref{fig:aug_sys}, and the equations
\begin{subequations}\label{eq:pre:sys_cl_xy}
    \begin{align}
w^+ &= S w \label{eq:pre:sys_cl_xy:w}\\        
x^+ & = A x + B u + Pw \label{eq:pre:sys_cl_xy:x}\\
\chi^+ &= \Fb \chi + \Lb y + \Bb u\label{eq:pre:sys_cl_xy:chi}\\
y &= C x + Qw,  \label{eq:pre:sys_cl_xy:y}
     \end{align}
\end{subequations}
which appear to the left of Figure~\ref{fig:ori_sys}.

\begin{figure}
	\centerline{\includegraphics[scale=.7]{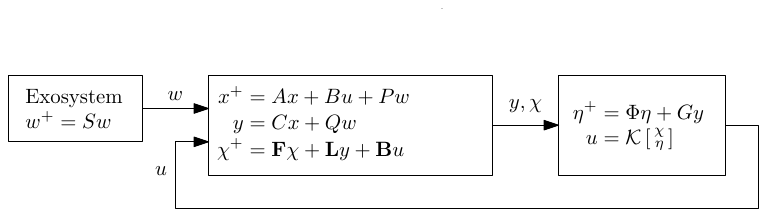}}
	\caption{Equivalent reformulation of the scheme in Figure~\ref{fig:plant_dyn_ctrl}.}
	\label{fig:ori_sys}
\end{figure}

We extend \cite[Lemmas 3 and 7]{li2024controller} in the additional presence of the exosystems $w^+ = S w$ and $\omega^+ = S \omega$ to establish a relation between the input-output evolutions of~\eqref{eq:aux_all_except_control} and \eqref{eq:pre:sys_cl_xy}.

\begin{lemma}
\label{lem:rel_sol_sys+exo_auxsys+auxexo}
    Let $(A,C)$ be observable.
    For each $(\hat{w}, \hat{x}, \hat{\chi})$ and sequence $\{u(k)\}_{k = 0}^{\infty}$, there exists $\hat{\xi}$ such that:
\begin{itemize}[leftmargin=*]
    \item the solution $\big(w(\cdot), x(\cdot), \chi(\cdot)\big)$ to \eqref{eq:pre:sys_cl_xy} with initial condition $\big(w(0), x(0), \chi(0)\big)= \big(\hat{w}, \hat{x}, \hat{\chi}\big)$ and with input $\{u(k)\}_{k = 0}^{\infty}$,
    \item the corresponding output response $y(\cdot) = C x(\cdot) + Q w(\cdot)$;  and
    \item the solution $\big(\omega(\cdot),\xi(\cdot)\big)$ to \eqref{eq:aux_all_except_control} with initial condition equal to $(\omega(\ell), \xi(\ell)) = \big( S^\ell \hat{w}, \hat{\xi} \big)$ and input $\{v(k)\}_{k = \ell}^{\infty} = \{u(k)\}_{k = \ell}^{\infty}$,
    \item the corresponding output response $\varphi(\cdot) = Z_1 \xi(\cdot) + Z_2 \Sb \omega(\cdot)$
\end{itemize}
satisfy for each $k \ge \ell$
\begin{align*}
& \xi(k) = \left[\begin{smallarray}{c}
        y(k - \ell)\\
        \vdotsS\\
        y(k - 1) \\
        \hline
        u(k - \ell)\\
        \vdotsS\\
        u(k - 1)
        \end{smallarray}
        \right] = \chi(k),
\quad \varphi(k) = y(k).
\end{align*}
\end{lemma}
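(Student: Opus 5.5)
The plan is to take $\hat{\xi}$ as the stack of the first $\ell$ outputs and inputs of the actual system, $\hat{\xi} := \big(y(0),\dots,y(\ell-1),u(0),\dots,u(\ell-1)\big)$, with $y(\cdot)$ the output response of \eqref{eq:pre:sys_cl_xy} from $(\hat{w},\hat{x})$ under $\{u(k)\}$. Because \eqref{eq:pre:sys_cl_xy:w}, \eqref{eq:pre:sys_cl_xy:x}, \eqref{eq:pre:sys_cl_xy:y} do not involve $\chi$, this output is independent of $\hat{\chi}$. We then verify three identities for $k\ge\ell$: (a) $\xi(k) = \zeta(k)$, where $\zeta(k):=\big(y(k-\ell),\dots,y(k-1),u(k-\ell),\dots,u(k-1)\big)$; (b) $\varphi(k)=y(k)$; (c) $\chi(k)=\zeta(k)$. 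Since $\omega(\ell)=S^\ell\hat{w}=w(\ell)$ and both obey $(\cdot)^+=S(\cdot)$, we have $\omega(k)=w(k)$ for all $k\ge \ell$. By invertibility of $S$ (Assumption~\ref{ass:S}), $\Sb\omega(k) = \big(S^{-\ell}w(k),\dots,S^{-1}w(k),w(k)\big) = \big(w(k-\ell),\dots,w(k-1),w(k)\big)$, which is valid for $k\ge\ell$.

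First we prove that $\zeta(k)$ satisfies the auxiliary dynamics and that (b) holds whenever (a) does. By \eqref{eq:claim1:yellxy:y} of Claim~\ref{claim:1:xy} and the definitions of $Z_1,Z_2$ in \eqref{eq:pre:matrix_SABZ:Z}, for every $k\ge\ell$
\[
y(k) = Z_1\zeta(k) + Z_2\big(w(k-\ell),\dots,w(k)\big) = Z_1\zeta(k)+Z_2\Sb\omega(k).
\]
The structure of $\Fb,\Lb,\Bb$ encodes a shift-register: $\Fb$ shifts each block up and zeroes the last block, $\Lb$ inserts its argument into the last output block, and $\Bb$ inserts its argument into the last input block. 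Hence
\[
\zeta(k+1)=\Fb\zeta(k)+\Lb y(k)+\Bb u(k).
\]
Substituting the expression for $y(k)$ gives $\zeta(k+1)=(\Fb+\Lb Z_1)\zeta(k)+\Bb u(k)+\Lb Z_2\Sb\omega(k)$, which is exactly \eqref{eq:aux_all_except_control:xi} with $v=u$. Since $\zeta(\ell)=\hat{\xi}=\xi(\ell)$, uniqueness of solutions of the difference equation gives (a) by induction on $k$. Then (b) follows from \eqref{eq:aux_all_except_control:varphi} and the displayed identity for $y(k)$.

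For (c), $\chi$ obeys the same shift recursion $\chi(k+1)=\Fb\chi(k)+\Lb y(k)+\Bb u(k)$ driven by the true $y,u$, starting from an arbitrary $\hat{\chi}$. Unrolling $\ell$ steps gives $\chi(\ell)=\Fb^\ell\hat{\chi}+\sum_{j=0}^{\ell-1}\Fb^{\ell-1-j}(\Lb y(j)+\Bb u(j))$. The key fact is that $\Fb$ is nilpotent with $\Fb^\ell=0$, since each diagonal block is an $\ell$-block upper shift. Moreover, $\Fb^{\ell-1-j}\Lb$ and $\Fb^{\ell-1-j}\Bb$ place $y(j)$ and $u(j)$ in the $(j+1)$-th output and input block positions respectively. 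Therefore $\chi(\ell)=\zeta(\ell)$. Since $\chi$ and $\zeta$ satisfy the same recursion for $k\ge\ell$, induction yields $\chi(k)=\zeta(k)$ for all $k\ge\ell$.

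The main obstacle is bookkeeping rather than a conceptual difficulty. One point is checking that $\Sb\omega(k)$ reproduces the exact window $(w(k-\ell),\dots,w(k))$ required by Claim~\ref{claim:1:xy}; this needs $k\ge\ell$ and the invertibility of $S$. The other is verifying the block-position claim for $\Fb^{\ell-1-j}\Lb$ and $\Fb^{\ell-1-j}\Bb$, which is easiest to argue by tracking a single block through the shift.
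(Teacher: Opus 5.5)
Your proof is correct and follows essentially the paper's argument. The ingredients coincide: the same choice $\hat{\xi}=(y(0),\dots,y(\ell-1),u(0),\dots,u(\ell-1))$, which the paper writes as $\Obs\hat{x}+\Tu(\cdot)+\Tw(\cdot)\hat{w}$ and identifies with the output stack via Claim~\ref{claim:1:xy}; the identity $\Sb\omega(k)=(w(k-\ell),\dots,w(k))$; the shift-register structure combined with \eqref{eq:claim1:yellxy:y} for the last output block; and an induction for $\chi$. Phrasing the $\xi$-induction as uniqueness of solutions, and obtaining $\chi(\ell)$ from $\Fb^\ell=0$ instead of listing $\chi(1),\dots,\chi(\ell)$, are only cosmetic differences.
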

\begin{proof}
See Appendix~\ref{app:lem:rel_sol_sys+exo_auxsys+auxexo}.
\end{proof}

Building on this lemma, the next theorem extends the stabilization setting of \cite[Lemma 8 and Theorem 1]{li2024controller} to solve the output regulation problem.

\begin{theorem}\label{thm:outreg_sys=outreg_auxsys}
Let the assumptions of Theorem~\ref{thm:outputregulation:aux} hold and \eqref{eq:sdp} be feasible so that $\mathcal{K}$ is as in~\eqref{eq:K}.
Then, controller \eqref{eq:pre:ctrl_sys} solves the output regulation problem for~\eqref{eq:sys}, that is: the closed-loop of~\eqref{eq:sys} and \eqref{eq:pre:ctrl_sys} given by
\begin{subequations}\label{eq:sys_cl_xy2}
    \begin{align}
    w^+ &= S w\label{eq:sys_cl_xy2:w}\\
         x^+ & = A x + B u + Pw\label{eq:sys_cl_xy2:x}\\
         y &= C x + Qw\label{eq:sys_cl_xy2:y}\\
         \chi^+ &= \Fb \chi + \Lb y + \Bb u\label{eq:sys_cl_xy2:chi}\\
         \eta^+ &= \Phi \eta + G y\label{eq:sys_cl_xy2:eta}\\
         u & = \mathcal{K} \left[
         \begin{smallmatrix}
             \chi\\
             \eta
         \end{smallmatrix}
         \right] \label{eq:sys_cl_xy2:u}
     \end{align}
\end{subequations}
(i)~is globally asymptotically stable with $w=0$ and (ii)~has $\lim_{k\to\infty} y(k) = 0$ for each initial condition $(w(0),x(0),\chi(0),\eta(0))$.
\end{theorem}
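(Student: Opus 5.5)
The plan is to transfer both properties from the auxiliary closed loop of Theorem~\ref{thm:outputregulation:aux} to \eqref{eq:sys_cl_xy2} via Lemma~\ref{lem:rel_sol_sys+exo_auxsys+auxexo}. The main difficulty is that GAS of \eqref{eq:sys_cl_xy2} concerns the full state $(x,\chi,\eta)$, including $x$ and the transient $\chi(k)$ for $k<\ell$, which the auxiliary system does not describe. The argument therefore has to handle a finite transient, then the tail via the auxiliary system, and finally recover $x$ from input-output data.

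\textbf{Step 1 (embedding).} Fix an initial condition $(w(0),x(0),\chi(0),\eta(0))$ and let $u(\cdot)$ be the input generated by the closed loop \eqref{eq:sys_cl_xy2}. Regard $u(\cdot)$ as an open-loop input sequence in Lemma~\ref{lem:rel_sol_sys+exo_auxsys+auxexo}. This gives $\hat\xi$ such that, for all $k\ge \ell$, $\xi(k)=\chi(k)$ and $\varphi(k)=y(k)$, where $\omega(\ell)=S^\ell w(0)$. Set $\mu(\ell):=\eta(\ell)$. Since $\varphi=y$ for $k\ge\ell$, $\mu(k)=\eta(k)$ for all $k\ge \ell$ by induction. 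It follows that $v(k)=u(k)=\mathcal{K}\smat{\chi(k)\\ \eta(k)}=\mathcal{K}\smat{\xi(k)\\ \mu(k)}$. So $(\omega,\xi,\mu)$ restricted to $k\ge\ell$ is exactly a solution of the closed loop \eqref{eq:sys:xi_eta}, initialized at time $\ell$ (time invariance lets us shift to time $0$).

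\textbf{Step 2 (regulation, item (ii)).} Theorem~\ref{thm:outputregulation:aux}(ii) gives $\varphi(k)\to0$, hence $y(k)\to0$.

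\textbf{Step 3 (GAS, item (i), $w=0$).} Here $\omega\equiv0$ and the closed loop \eqref{eq:sys:xi_eta} is linear with Schur matrix $\Psi_1\mathcal{G}=\bar{\Ab}+\bar{\Bb}\mathcal{K}$, where $\mathcal{G}=\mathcal{Y}\mathcal{X}^{-1}$; Schurness follows from \eqref{eq:sdp:schur} via the Schur complement and a Lyapunov inequality, as in the proof of Theorem~\ref{thm:outputregulation:aux}. Thus $(\chi(k),\eta(k))\to0$ exponentially for $k\ge\ell$, and so do $u$ and $y$. For $x$, Claim~\ref{claim:1:xy} \eqref{eq:claim1:yellxy:x} with $w=0$ expresses $x(k)$ linearly in terms of $\chi(k)$ for $k\ge\ell$, so $x(k)\to0$ as well.

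For Lyapunov stability (not just attractivity), I would note that the whole closed loop \eqref{eq:sys_cl_xy2} with $w=0$ is a linear time-invariant system. Convergence of every solution to zero then implies that its state matrix is Schur, which gives GAS.

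The step I expect to require most care is Step 1. One must check that the closed-loop input, which depends on $\chi,\eta$ and hence on the plant's transient for $k<\ell$, can legitimately be fed as an exogenous sequence to Lemma~\ref{lem:rel_sol_sys+exo_auxsys+auxexo}. One must also check that $\chi(k)$ equals the stacked past data for $k\ge\ell$ regardless of $\chi(0)$; this holds because $\Fb,\Lb,\Bb$ implement shift registers, and it is the lemma's content. Finally, the auxiliary exosystem initialization $\omega(\ell)=S^\ell w(0)$ must match, which is fine since Theorem~\ref{thm:outputregulation:aux}(ii) holds for every initial condition.
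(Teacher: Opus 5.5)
Your proposal is correct and follows essentially the same route as the paper. It embeds the closed-loop trajectory into the auxiliary closed loop via Lemma~\ref{lem:rel_sol_sys+exo_auxsys+auxexo} with $\mu(\ell)=\eta(\ell)$, transfers convergence of $\varphi$ and $(\xi,\mu)$ from Theorem~\ref{thm:outputregulation:aux}, recovers $x$ through \eqref{eq:claim1:yellxy:x}, and upgrades attractivity to global asymptotic stability using linearity and time invariance.
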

\begin{proof}
See Appendix~\ref{app:thm:outreg_sys=outreg_auxsys}.
\end{proof}

This theorem corresponds to the solution to Problem~\ref{prob:orp_ori} and shows how to achieve output regulation from input-output data.

\section{Numerical example}
\label{sec:numerical_example}

In the system to be controlled in~\eqref{eq:sys}, consider
\begin{align*}
& S = 
\smat{
     0 &    1\\
    -1 &    0}
\\
& A = \smat{
    1.0000 &   0.2500 &        0 &        0\\
         0 &   1.0000 &        0 &        0\\
   -0.3066 &  -0.0255 &   1.0000 &   0.2500\\
   -2.4525 &  -0.3066 &        0 &   1.0000
   },  
B = \smat{
    0.4396\\
    3.5172\\
   -0.0152\\
   -0.3015
   }, 
P = \smat{
     0 &    0\\
     0 &    0\\
     0 &    0\\
     0 &    0} \\
& C=\smat{
     1 &    0 &    1 &   0},  
Q =\smat{
     1 &    0},
\end{align*}
which is a discretization of the continuous-time linearized model in \cite[p.~67]{isidori2017lectures} for a vertical-take-off-and-landing aircraft.
It can be checked that the matrix $A$ is similar to $J_4(1)$, see \eqref{jordan_block}, so that the system is unstable.
As discussed in Section~\ref{sec:problem}, the matrices $A$, $B$, $P$, $C$, $Q$ are unknown; we know the observability index $\ell = 4$, see Assumption~\ref{ass:observ}; the matrix $S$ is known and its characteristic and minimal polynomial is $m_S(\lambda) = 1 + \lambda^2$ with eigenvalues $\pm \i$ so that the exosignal is sinusoidal with known frequency, but unknown amplitude and phase.
From $S$ we obtain $\Phi = \smat{0 & 1\\ -1 & 0}$ and $G = \smat{0\\ 1}$ as in~\eqref{eq:im-matrices}.
Data are collected for unknown $w(0)=(0.0538,0.1834)$ and $x(0)=(-2.2588,0.8622,0.3188,-1.3077)$; we set $\eta(0) = (-0.4336,0.3426)$ and the input $u$ to contain pseudorandom values drawn from a standard normal distribution.
The collected data are $\{ u(t_i), y(t_i) \}_{i=0}^{T}$
with $T = 20$ and, based on them, the matrices $U_1$, $\Psi_0$, $\Psi_1$ are constructed as in~\eqref{eq:data}.
From $S$, the Jordan block associated with the eigenvalue $\i = \e^{\i \pi/2}$ has order 1 so, from Appendix~\ref{app:lemma:factorization_W0} and specifically \eqref{boldM}, we construct
\begin{align*}
\mathbf{M} =
\smat{
\cos(\frac{\pi}{2}\ell) & \cos(\frac{\pi}{2}(\ell+1)) & \dots & \cos(\frac{\pi}{2}T)\\
\sin(\frac{\pi}{2}\ell) & \sin(\frac{\pi}{2}(\ell+1)) & \dots & \sin(\frac{\pi}{2}T)
}.
\end{align*}
As the key step in the proposed approach, the matrices $\mathbf{M}$, $U_1$, $\Psi_0$, $\Psi_1$ are used to design, via the SDP in Theorem~\ref{thm:outputregulation:aux}, the controller
\begin{align*}
\mathcal{K} = 
\smat{
-12.3406 &  41.7100 & -47.7489 &  18.6308 &   5.2395 &  -5.3061 & -12.0506  & -2.2147 &  -0.0905 &   0.0881
}.
\end{align*}
This controller is used in the closed loop~\eqref{eq:sys_cl_xy2} and the validity of Theorem~\ref{thm:outreg_sys=outreg_auxsys} is confirmed by the solutions in Figure~\ref{fig:closed_loop}, where the satisfaction of items (i) and (ii) in Theorem~\ref{thm:outreg_sys=outreg_auxsys} is illustrated, respectively, on the left and on the right.
    
\begin{figure}
\begin{minipage}{0.49\textwidth}
\centerline{\includegraphics[width=1\textwidth]{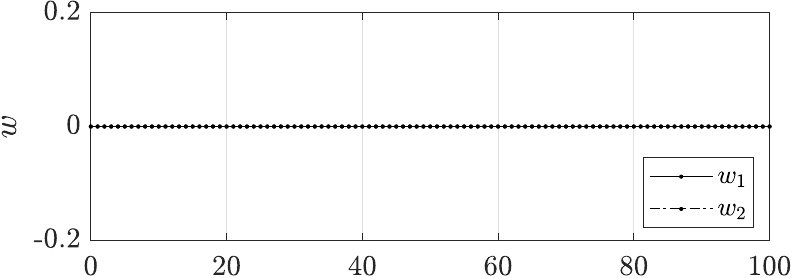}}
\centerline{\hspace*{-0.5mm}\includegraphics[width=1.01\textwidth]{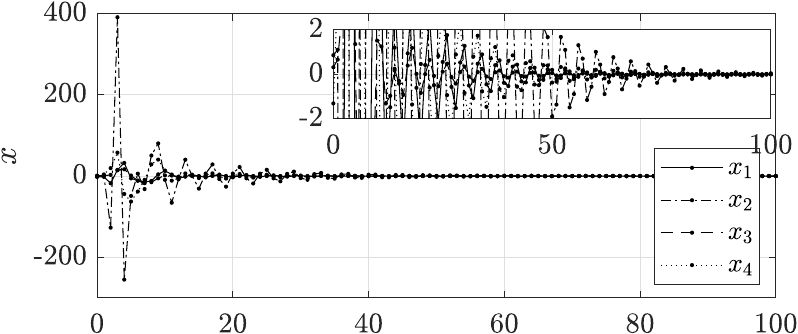}}
\centerline{\hspace*{0.5mm}\includegraphics[width=1\textwidth]{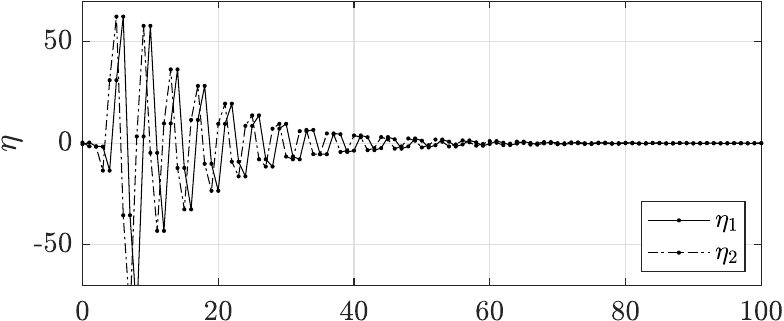}}
\centerline{\hspace*{0.5mm}\includegraphics[width=1\textwidth]{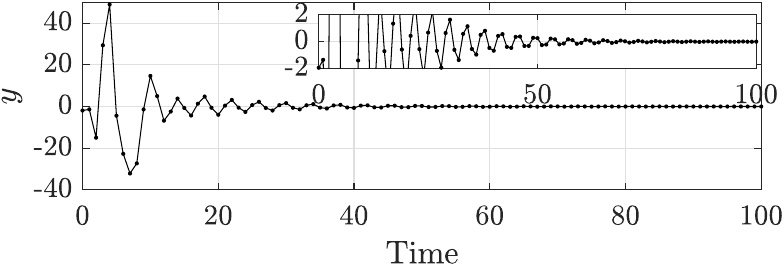}}
\end{minipage}
\begin{minipage}{0.49\textwidth}
\centerline{\includegraphics[width=1\textwidth]{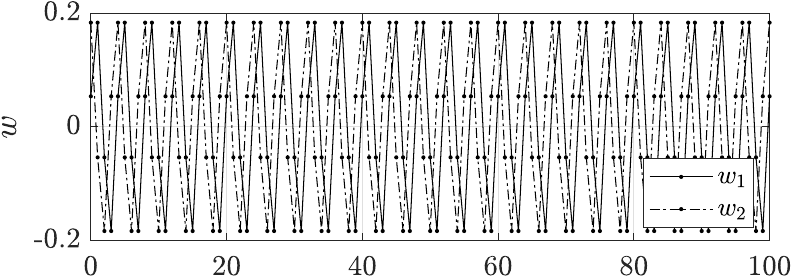}}
\centerline{\hspace*{-0.5mm}\includegraphics[width=1.01\textwidth]{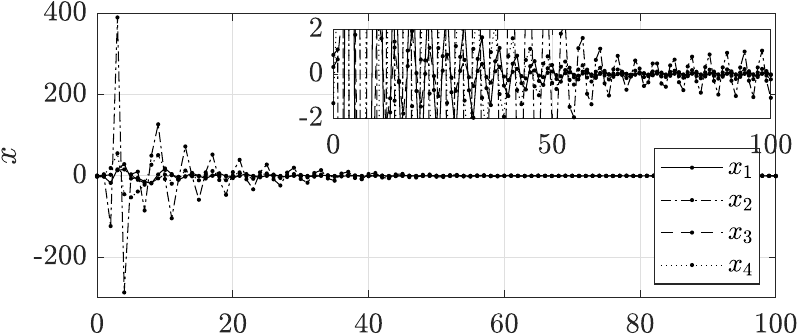}}
\centerline{\hspace*{0.5mm}\includegraphics[width=1\textwidth]{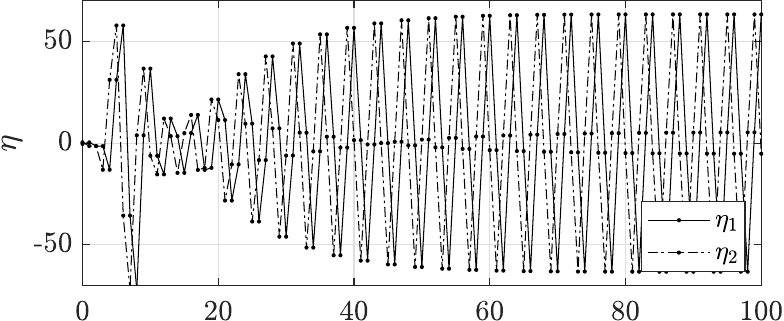}}
\centerline{\hspace*{0.5mm}\includegraphics[width=1\textwidth]{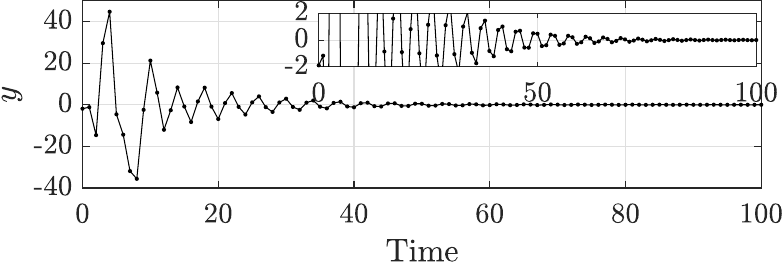}}
\end{minipage}
\caption{Data-based design of a controller achieving output regulation of~\eqref{eq:sys_cl_xy2}. (Left)~Solutions with $w=0$. (Right)~Solutions with $w\neq 0$.}
\label{fig:closed_loop}
\end{figure}

\appendix

\section{Proof of Claim~\ref{claim:1:xy}}
\label{app:claim:1:xy}

The solution $w(\cdot)$ to~\eqref{eq:sys:w} with initial condition $w(0) = \hat{w}$ is $w(k) = S^k \hat{w}$ for all $k \ge 0$ and we can thus consider $\{ w(k) \}_{k=0}^\infty$ as a fictitious input to~\eqref{eq:sys:x}.
The solution $x(\cdot)$ to~\eqref{eq:sys:x} with initial condition $x(0)=\hat{x}$, input $\{ u(k) \}_{k=0}^\infty$ and fictitious input $\{ w(k) \}_{k=0}^\infty$ satisfies, for each $k \ge \ell$,
\begin{align*}
            \smat{
                y(k - \ell)\\
                \vdotsS\\
                y(k - 1)} = \Obs x(k - \ell) + \Tu 
            \smat{
                u(k - \ell)\\
                \vdotsS\\
                u(k - 1)} + \Tw 
			\smat{
                w(k - \ell)\\
                \vdotsS\\
                w(k - 1)},
\end{align*}
which is \eqref{eq:claim1:yellxy:yell}.
\eqref{eq:claim1:yellxy:yell} implies that for each $k \ge \ell$
\begin{align}
\label{eq:claim1:x_k-ell}
x(k - \ell) = \Obs^{\tu{L}} \left(
            \smat{
                y(k - \ell)\\
                \vdotsS\\
                y(k - 1)}
- \Tu \smat{
                u(k - \ell)\\
                \vdotsS\\
                u(k - 1)}
- \Tw \smat{
                w(k - \ell)\\
                \vdotsS\\
                w(k - 1)}
                \right)
\end{align}
since $(A,C)$ is observable and $\Obs$ possesses a left inverse.
Also, the solution $x(\cdot)$ to~\eqref{eq:claim1:yellxy:x} with initial condition $x(0) = \hat{x}$, input $\{ u(k) \}_{k=0}^\infty$ and fictitious input $\{ w(k) \}_{k=0}^\infty$ satisfies, for each $k \ge \ell$,
\begin{align*}
x(k)  = A^\ell x(k-\ell) &  + A^{\ell-1} B u(k-\ell) + \dots + B u(k-1) \\
&  + A^{\ell-1} P w(k-\ell) + \dots + P w(k-1),
\end{align*}
as can be proven by mathematical induction; thus,
\begin{align*}
x(k) & = A^\ell x(k-\ell) + \Ru \smat{u(k-\ell)\\ \vdotsS \\ u(k-1)} + \Rw \smat{w(k-\ell) \\ \vdotsS \\ w(k-1)}\\
& \overset{\eqref{eq:claim1:x_k-ell}}{=} A^\ell \Obs^{\tu{L}} 
\left(
            \smat{
                y(k - \ell)\\
                \vdotsS\\
                y(k - 1)}
- \Tu \smat{
                u(k - \ell)\\
                \vdotsS\\
                u(k - 1)}
- \Tw \smat{
                w(k - \ell)\\
                \vdotsS\\
                w(k - 1)}
\right)\\
& \qquad + \Ru \smat{u(k-\ell)\\ \vdotsS \\ u(k-1)} + \Rw \smat{w(k-\ell) \\ \vdotsS \\ w(k-1)},
\end{align*}
which is \eqref{eq:claim1:yellxy:x}.
\eqref{eq:claim1:yellxy:y} follows from $y(\cdot) = C x(\cdot) + Q w(\cdot)$.

\section{Proof of Lemma~\ref{lemma:data_rel}}
\label{app:lemma:data_rel}

By Claim~\ref{claim:1:xy} and Assumption~\ref{ass:observ}, \eqref{eq:claim1:yellxy:y} holds for each $k = \ell, \dots, T$; so, for each $k = \ell, \dots, T$,
\begin{align*}
& \left[
\begin{smallarray}{c}
y(k-\ell+1)\\
\vdotsS\\
y(k-1)\\
y(k)\\
\hline
u(k-\ell+1)\\
\vdotsS\\
u(k)\\
\hline
\eta(k+1)
\end{smallarray}
\right] \\
& =
\left[
\begin{smallarray}{c}
y(k-\ell+1)\\
\vdotsS\\
y(k-1)\\
CA^{\ell} \Obs^{\tu{L}}
	\smat{
	y(k-\ell)\\
	\vdotsS\\
	y(k-1)            
	}  + (C\Ru - CA^{\ell} \Obs^{\tu{L}}\Tu)
	\smat{
	u(k-\ell)\\
	\vdotsS \\
	u(k-1)
	}  + \smat{C\Rw - CA^{\ell} \Obs^{\tu{L}}\Tw & Q} \smat{w(k-\ell)\\
	\vdotsS \\
	w(k-1)\\ w(k)} \\
\hline 
u(k-\ell+1)\\
\vdotsS\\
u(k)\\
\hline
\Phi \eta(k) + G y(k)
\end{smallarray}
\right]\\
& =
\left[
\begin{smallarray}{c}
(\mathbf{F} + \mathbf{L} Z_1  )
\left[
\begin{smallarray}{c}
	y(k-\ell)\\
	\vdotsS\\
	y(k-1)\\
	\hline
	u(k-\ell)\\
	\vdotsS\\
	u(k-1)\\	
\end{smallarray}
\right]
+
\Lb Z_2 \Sb w(k) + \mathbf{B} u(k)
\\
\Phi \eta(k) + G \left(
Z_1
\left[
\begin{smallarray}{c}
	y(k-\ell)\\
	\vdotsS\\
	y(k-1)\\
	\hline
	u(k-\ell)\\
	\vdotsS\\
	u(k-1)\\	
\end{smallarray}
\right]
+
Z_2 \Sb w(k)
\right)
\end{smallarray}
\right]\\
& = \underbrace{\bmat{\mathbf{F}+ \Lb Z_1 & 0\\ G Z_1 & \Phi}}_{=\bar{\Ab}}
\left[
\begin{smallarray}{c}
	y(k-\ell)\\
	\vdotsS\\
	y(k-1)\\
	u(k-\ell)\\
	\vdotsS\\
	u(k-1)\\	
	\hline
	\eta(k)
\end{smallarray}
\right]
+
\underbrace{\bmat{\mathbf{B}\\ 0}}_{=\bar{\Bb}} u(k) + \underbrace{\bmat{\Lb Z_2\\ G Z_2}}_{=\bar{\Pb}} \Sb w(k)
\end{align*}
where we have used $\Sb$ thanks to Assumption~\ref{ass:S} and the definitions of $\bar{\Ab}$, $\bar{\Bb}$, $\bar{\Pb}$ in~\eqref{eq:sys:xi_eta:barAbarB}.
By stacking the so-obtained columns and forming the matrices $U_1$, $\Psi_0$, $\Psi_1$, $W_0$ in~\eqref{eq:data}-\eqref{eq:W0}, \eqref{eq:data-matrices-identity} follows.
 
\section{Proof of Lemma~\ref{lemma:factorization_W0}}
\label{app:lemma:factorization_W0}

The factorization of $W_0$ in Lemma~\ref{lemma:factorization_W0} can be obtained from the next claim, which is the discrete-time counterpart of \cite[Lemma 1]{hu2024dd-output-regulation}. 

\begin{claim}
    \label{claim:exo_response}
Consider the system
\[
w^+= Sw
\]
with initial condition $w(0)=\hat{w} \in \R^{n_w}$. 
Let $\lambda_1$, \dots, $\lambda_r$ be the real eigenvalues of $S$ and $\rho_{r+1} \e^{\i \theta_{r+1}}$, $\rho_{r+1} \e^{-\i \theta_{r+1}}$, \dots, $\rho_{r+s} \e^{\i \theta_{r+s}}$, $\rho_{r+s} \e^{-\i \theta_{r+s}}$ be the nonreal eigenvalues of $S$.
Let $J$ be the Jordan form of $S$, namely, 
\begin{align}
J & = J_{k_1}(\lambda_1) \oplus \dots \oplus J_{k_r}(\lambda_r) \oplus J_{k_{r+1}}(\rho_{r+1} \e^{\i \theta_{r+1}}) \oplus J_{k_{r+1}}(\rho_{r+1} \e^{-\i \theta_{r+1}}) \label{jordan_can_form} \\
&  \hspace*{33mm} \oplus \dots \oplus J_{k_{r+s}}(\rho_{r+s} \e^{\i \theta_{r+s}}) \oplus J_{k_{r+s}}(\rho_{r+s} \e^{-\i \theta_{r+s}})
\notag 
\end{align}
where, for any $\mu \in \C$ eigenvalue of $S$,
\begin{align}
J_k(\mu) := \smat{
\mu & 1 & 0 & \dots & 0\\ 
0 & \mu & 1 & \dots & 0\\ 
\vdotsS & \vdotsS & \ddotsS & \ddotsS & \vdotsS\\ 
0 & 0 & \dots & \mu & 1\\
0 & 0 & \dots & 0 & \mu
} 
\in \C^{k \times k}
\label{jordan_block}
\end{align}
is the Jordan block of order $k$ associated with the eigenvalue $\mu$ \cite[Def.~3.1.1]{horn2013matrix} so that $k_1 + \dots + k_r + 2 k_{r+1} + \dots + 2 k_{r+s} = n_w$.

Then, there exist a nonsingular matrix $\tilde{\mathcal{T}}$, a matrix $L(\hat{w}) \in \R^{n_w \times n_w}$, matrices 
\begin{subequations}
\label{M-realORcomplex}
\begin{align}
M_{k_i}(t, \lambda_i) & \!:= \!
\smat{
{t \choose t-k_i+1} \lambda_i^{t-k_i+1} \\
{t \choose t-k_i+2} \lambda_i^{t-k_i+2} \\
\vdotsS \\
{t \choose t-1} \lambda_i^{t-1}\\
\lambda_i^t
}\!, \,\, i = 1, \dots, r, \, t\in \Z_{\ge 0} \label{M-real}
\\
M_{k_i}(t,\rho_i,\theta_i) & \!:=\! \left[
\begin{smallarray}{c}
{t \choose t-k_i+1} \rho_i^{t-k_i+1} \cos(\theta_i (t-k_i+1)) \\
{t \choose t-k_i+1} \rho_i^{t-k_i+1} \sin(\theta_i (t-k_i+1)) \\ \hline
\vdotsS \\ \hline
{t \choose t-1} \rho_i^{t-1} \cos(\theta_i (t-1)) \\
{t \choose t-1} \rho_i^{t-1} \sin(\theta_i (t-1)) \\ \hline
\rho_i^t \cos(\theta_i t) \\
\rho_i^t \sin(\theta_i t)
\end{smallarray}
\right]\!, \label{M-complex} \, i = r+1, \dots, r+s, \, t\in \Z_{\ge 0}
\end{align}
\end{subequations}
such that, for all $t \in \Z_{\ge 0}$,
\begin{align}
& w(t) = S^t w(0) = S^t \hat{w} = \tilde{\mathcal{T}} L(\hat{w}) \smat{M_{k_1}(t,\lambda_1)\\ \vdotsS \\ M_{k_r}(t,\lambda_r) \\ M_{k_{r+1}}(t,\rho_{r+1},\theta_{r+1})\\ \vdotsS \\ M_{k_{r+s}}(t,\rho_{r+s},\theta_{r+s})} =: \tilde{\mathcal{T}}  L(\hat{w}) M(t). \label{w=TLM}
\end{align}
\end{claim}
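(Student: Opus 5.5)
Let $\mathcal{T}\in\C^{n_w\times n_w}$ be nonsingular with $S=\mathcal{T}J\mathcal{T}^{-1}$, with $J$ as in \eqref{jordan_can_form}. Then $S^t\hat{w}=\mathcal{T}J^t\mathcal{T}^{-1}\hat{w}$, and $J^t$ is the direct sum of the powers of its Jordan blocks. The plan is to express each Jordan-block contribution as a constant matrix, depending only on $\hat{w}$, times the vectors $M_{k_i}(t,\cdot)$ in \eqref{M-realORcomplex}, and then to convert the complex representation into a real one.

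\textbf{Single Jordan block.} For $J_k(\mu)=\mu I+N$ with $N$ nilpotent, the binomial theorem gives $J_k(\mu)^t=\sum_{j=0}^{k-1}{t\choose j}\mu^{t-j}N^j$. We have ${t\choose j}={t\choose t-j}$, and this equals $0$ when $t<j$, consistent with the paper's convention on ${p\choose q}$ for $q<0$. Hence, for any $c\in\C^k$, the $h$-th entry of $J_k(\mu)^t c$ is $\sum_{j}{t\choose t-j}\mu^{t-j}c_{h+j}$. This is a linear combination, with coefficients taken from $c$, of the entries of the vector $\big({t\choose t-k+1}\mu^{t-k+1},\dots,\mu^t\big)$. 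So $J_k(\mu)^t c=H_k(c)\,m_k(t,\mu)$, where $H_k(c)$ is a Hankel-type upper-triangular matrix built from $c$. For real $\mu=\lambda_i$ we have $m_k(t,\lambda_i)=M_{k_i}(t,\lambda_i)$ directly.

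\textbf{Conjugate pairs.} For $\mu=\rho\e^{\i\theta}$, we use $\mu^{t-j}=\rho^{t-j}\big(\cos(\theta(t-j))+\i\sin(\theta(t-j))\big)$. This shows that the entries of $m_k(t,\mu)$ and of $m_k(t,\bar\mu)$ are complex-linear combinations of the real entries of $M_{k}(t,\rho,\theta)$ in \eqref{M-complex}. Concretely, $m_k(t,\mu)=E\,M_k(t,\rho,\theta)$ and $m_k(t,\bar\mu)=\bar E\,M_k(t,\rho,\theta)$, where $E=I_k\otimes[1\ \ \i]$ is suitably arranged.

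\textbf{Assembling the representation.} Putting the blocks together gives $S^t\hat{w}=\mathcal{T}\,\mathcal{H}(\mathcal{T}^{-1}\hat{w})\,\mathcal{E}\,M(t)$, where $\mathcal{H}$ is block diagonal with the $H_{k_i}$ blocks and $\mathcal{E}$ collects the identities and the $E,\bar E$ blocks. The resulting matrix $\mathcal{T}\mathcal{H}\mathcal{E}$ has size $n_w\times n_w$, because $M(t)$ has exactly $\sum k_i+2\sum k_{r+i}=n_w$ rows.

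\textbf{Main obstacle.} The difficulty is realness: the claim wants $\tilde{\mathcal{T}}$ nonsingular and $L(\hat w)$ real. I would handle this by choosing $\mathcal{T}$ so that the columns associated with the conjugate blocks are conjugate to each other, which is possible because $S$ is real. Then $\mathcal{T}^{-1}\hat{w}$ has conjugate-paired coordinates, since $\hat{w}$ is real. The pair of conjugate contributions then sums to $2\,\mathrm{Re}(\cdot)$, which is real.

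The cleanest way to exploit this is to take $\tilde{\mathcal{T}}$ to be the real nonsingular matrix of the real Jordan basis, that is, real and imaginary parts of the generalized eigenvectors. In that basis the conjugate pair $v,\bar v$ becomes the real pair $\mathrm{Re}\,v,\mathrm{Im}\,v$. Writing the product above as $\tilde{\mathcal{T}}L(\hat{w})M(t)$, I absorb the fixed change of basis into $L(\hat w)$. Since $\tilde{\mathcal{T}}L(\hat w)M(t)=S^t\hat w$ is real for every $t$, and $\tilde{\mathcal{T}}$ is real and invertible, $L(\hat w)M(t)$ is real for every $t$. The remaining step, which I expect to need the most care, is to conclude from this that $L(\hat w)$ itself is real. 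I would get this from the linear independence of the sequences that form the entries of $M(t)$ over $t\in\Z_{\ge0}$.
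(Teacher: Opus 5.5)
Your construction is correct except for the final step, and that step is both unnecessary and invalid in general. Deducing that $L(\hat w)$ is real from ``$L(\hat w)M(t)$ is real for all $t$'' requires the $n_w$ scalar sequences forming the entries of $M(t)$ to be linearly independent. That holds when every eigenvalue has a single Jordan block. It fails when an eigenvalue has several blocks, a case the block-indexed notation of the statement allows and the paper intends (its internal model uses the minimal, not the characteristic, polynomial of $S$). For example, $S=I_2$ gives $J=J_1(1)\oplus J_1(1)$ and $M(t)=(1,1)$ for every $t$. Then $L=\left[\begin{smallmatrix}\mathrm{i}&-\mathrm{i}\\0&0\end{smallmatrix}\right]$ satisfies $LM(t)=0$ without being real.

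Your own conjugate-pair observation already gives realness directly. Take real chains for real eigenvalues and chains $V,\bar V$ for $\mu,\bar\mu$. Then the coordinates $c=\mathcal{T}^{-1}\hat w$ are real on real blocks and come in conjugate pairs $(c_i,\bar c_i)$ on the others. So $H_k(\bar c_i)=\overline{H_k(c_i)}$, and each pair contributes $VH_k(c_i)E+\bar V\,\overline{H_k(c_i)}\,\bar E=2\,\mathrm{Re}(VH_k(c_i)E)$ to $R(\hat w):=\mathcal{T}\mathcal{H}(\mathcal{T}^{-1}\hat w)\mathcal{E}$, while real blocks contribute real terms. Hence $R(\hat w)$ is real and $S^t\hat w=R(\hat w)M(t)$. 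It follows that $L(\hat w):=\tilde{\mathcal{T}}^{-1}R(\hat w)$ is real for any real nonsingular $\tilde{\mathcal{T}}$; the statement even allows $\tilde{\mathcal{T}}=I$.

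With that repair, your route differs from the paper's mainly in where the complex-to-real conversion happens. The power formula is the same: the paper proves it by induction with its extended binomial convention, and you use the binomial expansion of $\mu I+N$. Your $H_k(c)$ is the paper's $L_{k_i}(\hat w)$ for real blocks.

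The paper realifies the \emph{matrix} $J_k(\mu)^t\oplus J_k(\bar\mu)^t$ by conjugating with $\left[\begin{smallmatrix}-\mathrm{i}I&-\mathrm{i}I\\ I&-I\end{smallmatrix}\right]$ and a shuffle permutation. It then reads off $L_{k_i}(\hat w)$, with $2\times 2$ blocks $\left[\begin{smallmatrix}a&b\\ b&-a\end{smallmatrix}\right]$, in the coordinates $\hat\zeta=(\mathcal{T}\hat{\mathcal{T}})^{-1}\hat w$. This yields an explicitly structured, block-diagonal $L(\hat w)$ and $\tilde{\mathcal{T}}=\mathcal{T}\hat{\mathcal{T}}$.

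You instead realify the time-dependent \emph{vector} through the constant map $m_k(t,\mu)=EM_k(t,\rho,\theta)$. This avoids the permutation bookkeeping and makes the realness mechanism explicit. That is a genuine advantage, because the paper treats $\hat\zeta$ as real without comment. This holds only for a compatible choice of $\mathcal{T}$: with that fixed conjugating matrix, the chain for $\bar\mu$ must be $-\bar V$. With $\bar V$, the product $\mathcal{T}\hat{\mathcal{T}}$ is purely imaginary on those columns. Your argument states the basis choice it needs.
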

\begin{proof}
By the Jordan canonical form theorem \cite[Thm.~3.1.11]{horn2013matrix}, there is a nonsingular matrix $\mathcal{T}$ such that, for $J$ in \eqref{jordan_can_form}, $J = \mathcal{T}^{-1} S \mathcal{T}$.
For $t \in \Z_{\ge 0}$ and any $\mu \in \C$ eigenvalue of $S$, the $t$-th power of the Jordan block $J_{k}(\mu)$ in~\eqref{jordan_block} is
\begin{align}
\label{jordan_block_power}
J_{k}(\mu)^t = 
\smat{\mu^t & {t \choose t-1} \mu^{t-1} & {t \choose t-2} \mu^{t-2} & \dots & {t \choose t-k+1} \mu^{t-k+1}\\
0 & \mu^t & {t \choose t-1} \mu^{t-1} & \dots & {t \choose t-k+2} \mu^{t-k+2}\\
\vdotsS & \vdotsS & \vdotsS & \ddotsS & \vdotsS \\
0 & 0 & 0 & \dots & {t \choose t-1} \mu^{t-1}\\
0 & 0 & 0 & \dots & \mu^t
 }.
\end{align}
We prove this by mathematical induction.
As for the base case,
\begin{align*}
J_{k}(\mu)^0 = 
\smat{\mu^0 & {0 \choose -1} \mu^{-1} & {0 \choose -2} \mu^{-2} & \dots & {0 \choose -k+1} \mu^{-k+1}\\
0 & \mu^0 & {0 \choose -1} \mu^{-1} & \dots & {0 \choose -k+2} \mu^{-k+2}\\
\vdotsS & \vdotsS & \vdotsS & \ddotsS & \vdotsS \\
0 & 0 & 0 & \dots & { 0\choose -1} \mu^{-1}\\
0 & 0 & 0 & \dots & \mu^0
 } = I
\end{align*}
where we use the definition ${0 \choose -1}={0 \choose -2} = \dots = {0 \choose -k+1}= 0$, see notation in Section~\ref{sec:notation}.
As for the induction step, we assume that \eqref{jordan_block_power} holds and need to show that
\begin{align}
\label{jordan_block_power_to_be_proven}
J_{k}(\mu)^{t+1} = 
\smat{\mu^{t+1} & {t+1 \choose t} \mu^{t} & {t+1 \choose t-1} \mu^{t-1} & \dots & {t+1 \choose t-k+2} \mu^{t-k+2}\\
0 & \mu^{t+1} & {t+1 \choose t} \mu^{t} & \dots & {t+1 \choose t-k+3} \mu^{t-k+3}\\
\vdotsS & \vdotsS & \vdotsS & \ddotsS & \vdotsS \\
0 & 0 & 0 & \dots & {t+1 \choose t} \mu^{t}\\
0 & 0 & 0 & \dots & \mu^{t+1}
 }.
\end{align}
We have
\begin{align}
\label{jordan_block_power_what_we_prove}
& J_{k}(\mu)^{t+1} = J_{k}(\mu)^t J_{k}(\mu) \\ 
& \overset{\eqref{jordan_block_power}}{=} 
\smat{\mu^t & {t \choose t-1} \mu^{t-1} & {t \choose t-2} \mu^{t-2} & \dots & {t \choose t-k+1} \mu^{t-k+1}\\
0 & \mu^t & {t \choose t-1} \mu^{t-1} & \dots & {t \choose t-k+2} \mu^{t-k+2}\\
\vdotsS & \vdotsS & \vdotsS & \ddotsS & \vdotsS \\
0 & 0 & 0 & \dots & {t \choose t-1} \mu^{t-1}\\
0 & 0 & 0 & \dots & \mu^t
}
\smat{
\mu & 1 & 0 & \dots & 0\\ 
0 & \mu & 1 & \dots & 0\\ 
\vdotsS & \vdotsS & \ddotsS & \ddotsS & \vdotsS\\ 
0 & 0 & \dots & \mu & 1\\
0 & 0 & \dots & 0 & \mu
} \notag \\
& = \smat{\mu^{t+1} & \mu^{t} + {t \choose t-1} \mu^{t} & {t \choose t-1} \mu^{t-1} + {t \choose t-2} \mu^{t-1} & \dots & {t \choose t-k+2} \mu^{t-k+2} + {t \choose t-k+1} \mu^{t-k+2}\\
0 & \mu^{t+1} & \mu^{t} + {t \choose t-1} \mu^{t} & \dots & {t \choose t-k+3} \mu^{t-k+3} + {t \choose t-k+2} \mu^{t-k+3}\\
\vdotsS & \vdotsS & \vdotsS & \ddotsS & \vdotsS \\
0 & 0 & 0 & \dots &  \mu^{t} + {t \choose t-1} \mu^{t}\\
0 & 0 & 0 & \dots & \mu^{t+1}
 } \notag \\
 & = \smat{\mu^{t+1} & {t+1 \choose t} \mu^{t} & {t+1 \choose t-1} \mu^{t-1} & \dots & {t+1 \choose t-k+2} \mu^{t-k+2}\\
0 & \mu^{t+1} & {t+1 \choose t} \mu^{t} & \dots & {t+1 \choose t-k+3} \mu^{t-k+3}\\
\vdotsS & \vdotsS & \vdotsS & \ddotsS & \vdotsS \\
0 & 0 & 0 & \dots & {t+1 \choose t} \mu^{t}\\
0 & 0 & 0 & \dots & \mu^{t+1} 
} \notag
\end{align}
where in the last equality we used that for all integers $p$ and $q$ such that $p \ge q$ and $p \ge 1$, ${p \choose q} = {p-1 \choose q-1} + {p-1 \choose q}$. (Indeed, this identity is known to hold for all integers $p$, $q$ such that $p \ge q \ge 1$; if $p \ge q$, $p \ge 1$ and $q=0$, ${p \choose 0} =1= {p-1 \choose -1} + {p-1 \choose 0} $; if $p \ge q$, $p \ge 1$ and $q \le -1$, ${p \choose q} = 0 =  {p-1 \choose q-1} + {p-1 \choose q}$.)
Since \eqref{jordan_block_power_to_be_proven} and \eqref{jordan_block_power_what_we_prove} coincide, we have proven also the induction step.
By specializing \eqref{jordan_block_power}, we have that for $t \in \Z_{\ge 0}$ and the nonreal eigenvalues $\rho_i \e^{\i \theta_i}$ and $\rho_i \e^{-\i \theta_i}$, with $i = r+1, \dots, r+s$,
\begin{align}
\label{jordan_block_power_nonreal}
& J_{k_i}(\rho_i \e^{\i \theta_i})^t \\
& = 
\smat{
\rho_i^t \e^{\i \theta_i t} & {t \choose t-1} \rho_i^{t-1} \e^{\i \theta_i(t-1)} & {t \choose t-2} \rho_i^{t-2} \e^{\i \theta_i (t-2)} & \dots & {t \choose t-k_i+1} \rho_i^{t-k_i+1} \e^{\i \theta_i(t-k_i+1)} \\
0 & \rho_i^t \e^{\i \theta_i t} & {t \choose t-1} \rho_i^{t-1} \e^{\i \theta_i(t-1)} & \dots & {t \choose t-k_i+2} \rho_i^{t-k_i+2} \e^{\i \theta_i(t-k_i+2)} \\
0 & 0 & \rho_i^t \e^{\i \theta_i t} &  \dots & {t \choose t-k_i+3} \rho_i^{t-k_i+3} \e^{\i \theta_i(t-k_i+3)} \\
\vdotsS & \vdotsS & \vdotsS & \ddotsS & \vdotsS \\
0 & 0 & 0 & \dots & {t \choose t-1} \rho_i^{t-1} \e^{\i \theta_i(t-1)} \\
0 & 0 & 0 & \dots & \rho_i^t \e^{\i \theta_i t}
 },\!\!\! \notag
\end{align}
so that $J_{k_i}(\rho_i \e^{-\i \theta_i})^t = \overline{J_{k_i}(\rho_i \e^{\i \theta_i})^t}$.
With a similar approach to that used to obtain the real Jordan form \cite[\S 3.4]{horn2013matrix}, we apply two similarity transformations to the blocks
$J_{k_i}(\rho_i \e^{\i \theta_i})^t \oplus J_{k_i}(\rho_i \e^{-\i \theta_i})^t$, $i = r+1, \dots, r+s$ contained in $J^t$, see~\eqref{jordan_can_form}.
(i)~To transform $J_{k_i}(\rho_i \e^{\i \theta_i})^t \oplus J_{k_i}(\rho_i \e^{-\i \theta_i})^t$ into a matrix with real entries we consider
\begin{align*}
J^{\textup{cc}}_{2 k_i}(t,\rho_i,\theta_i) & := \bmat{-\i I_{k_i} & -\i I_{k_i}\\ I_{k_i} & -I_{k_i}} \bmat{J_{k_i}(\rho_i \e^{\i \theta_i})^t & 0\\ 0 & J_{k_i}(\rho_i \e^{-\i \theta_i})^t} \bmat{-\i I_{k_i} & -\i I_{k_i}\\ I_{k_i} & -I_{k_i}}^{-1}\\
& = \bmat{-\i J_{k_i}(\rho_i \e^{\i \theta_i})^t & -\i \overline{J_{k_i}(\rho_i \e^{\i \theta_i})^t} \\ J_{k_i}(\rho_i \e^{\i \theta_i})^t &  -\overline{J_{k_i}(\rho_i \e^{\i \theta_i})^t}} \bmat{-I_{k_i} & \i I_{k_i}\\ -I_{k_i} & -\i I_{k_i} } \frac{1}{2\i}\\
& = \bmat{
\frac{J_{k_i}(\rho_i \e^{\i \theta_i})^t + \overline{J_{k_i}(\rho_i \e^{\i \theta_i})^t}}{2} & \frac{J_{k_i}(\rho_i \e^{\i \theta_i})^t - \overline{J_{k_i}(\rho_i \e^{\i \theta_i})^t}}{2\i} \\ 
-\frac{J_{k_i}(\rho_i \e^{\i \theta_i})^t - \overline{J_{k_i}(\rho_i \e^{\i \theta_i})^t}}{2\i} & \frac{J_{k_i}(\rho_i \e^{\i \theta_i})^t + \overline{J_{k_i}(\rho_i \e^{\i \theta_i})^t}}{2} }
\end{align*}
where, by \eqref{jordan_block_power_nonreal},
\begin{align*}
&  \frac{J_{k_i}(\rho_i \e^{\i \theta_i})^t + \overline{J_{k_i}(\rho_i \e^{\i \theta_i})^t}}{2} \\
& = \smat{
\rho_i^t \cos(\theta_i t) & {t \choose t-1} \rho_i^{t-1} \cos(\theta_i(t-1)) & {t \choose t-2} \rho_i^{t-2} \cos(\theta_i(t-2)) & \dots & {t \choose t-k_i+1} \rho_i^{t-k_i+1} \cos(\theta_i(t-k_i+1)) \\
0 & \rho_i^t \cos(\theta_i t) & {t \choose t-1} \rho_i^{t-1} \cos(\theta_i(t-1)) & \dots & {t \choose t-k_i+2} \rho_i^{t-k_i+2} \cos( \theta_i(t-k_i+2))\\
0 & 0 & \rho_i^t \cos(\theta_i t) & \dots & {t \choose t-k_i+3} \rho_i^{t-k_i+3} \cos( \theta_i(t-k_i+3))\\
\vdotsS & \vdotsS & \vdotsS & \ddotsS & \vdotsS \\
0 & 0 & 0 & \dots & {t \choose t-1} \rho_i^{t-1} \cos(\theta_i(t-1)) \\
0 & 0 & 0 & \dots & \rho_i^t \cos(\theta_i t)
 }
 \\
& \frac{J_{k_i}(\rho_i \e^{\i \theta_i})^t - \overline{J_{k_i}(\rho_i \e^{\i \theta_i})^t}}{2 \i} \\
& = 
\smat{
\rho_i^t \sin(\theta_i t) & {t \choose t-1} \rho_i^{t-1} \sin(\theta_i(t-1)) & {t \choose t-2} \rho_i^{t-2} \sin(\theta_i(t-2)) & \dots & {t \choose t-k_i+1} \rho_i^{t-k_i+1} \sin(\theta_i(t-k_i+1)) \\
0 & \rho_i^t \sin(\theta_i t) & {t \choose t-1} \rho_i^{t-1} \sin(\theta_i(t-1)) & \dots & {t \choose t-k_i+2} \rho_i^{t-k_i+2} \sin( \theta_i(t-k_i+2))\\
0 & 0 & \rho_i^t \sin(\theta_i t) & \dots & {t \choose t-k_i+3} \rho_i^{t-k_i+3} \sin( \theta_i(t-k_i+3))\\
\vdotsS & \vdotsS & \vdotsS & \ddotsS & \vdotsS \\
0 & 0 & 0 & \dots & {t \choose t-1} \rho_i^{t-1} \sin(\theta_i(t-1)) \\
0 & 0 & 0 & \dots & \rho_i^t \sin(\theta_i t)
 }.
\end{align*}
(ii)~To recover a block triangular structure in $J^{\textup{cc}}_{2 k_i}(t,\rho_i,\theta_i)$, we consider row and column permutations as
\begin{align}
& 
\begin{matrix}
\overbrace{\rule{45pt}{0pt}}^{k_i\text{ columns }}\overbrace{\rule{45pt}{0pt}}^{k_i\text{ columns }}\hspace*{180pt}
\\
\left[
\begin{array}{c|c}
\smatNoB{1 & 0 & \dots & 0 & 0\\
0 & 0 & \dots & 0 & 0} & \smatNoB{0 & 0 & \dots & 0 & 0\\
1 & 0 & \dots & 0 & 0}\\ \hline
\smatNoB{0 & 1 & \dots & 0 & 0\\
0 & 0 & \dots & 0 & 0} & \smatNoB{0 & 0 & \dots & 0 & 0\\
0 & 1 & \dots & 0 & 0}\\ \hline
\vdotsS & \vdotsS \\ \hline
\smatNoB{0 & 0 & \dots & 1 & 0\\
0 & 0 & \dots & 0 & 0} & \smatNoB{0 & 0 & \dots & 0 & 0\\
0 & 0 & \dots & 1 & 0}\\ \hline
\smatNoB{0 & 0 & \dots & 0 & 1\\
0 & 0 & \dots & 0 & 0} & \smatNoB{0 & 0 & \dots & 0 & 0\\
0 & 0 & \dots & 0 & 1}
\end{array}
\right]
J^{\textup{cc}}_{2 k_i}(t,\rho_i,\theta_i)
\left[
\begin{array}{c|c}
\smatNoB{1 & 0 & \dots & 0 & 0\\
0 & 0 & \dots & 0 & 0} & \smatNoB{0 & 0 & \dots & 0 & 0\\
1 & 0 & \dots & 0 & 0}\\ \hline
\smatNoB{0 & 1 & \dots & 0 & 0\\
0 & 0 & \dots & 0 & 0} & \smatNoB{0 & 0 & \dots & 0 & 0\\
0 & 1 & \dots & 0 & 0}\\ \hline
\vdotsS & \vdotsS \\ \hline
\smatNoB{0 & 0 & \dots & 1 & 0\\
0 & 0 & \dots & 0 & 0} & \smatNoB{0 & 0 & \dots & 0 & 0\\
0 & 0 & \dots & 1 & 0}\\ \hline
\smatNoB{0 & 0 & \dots & 0 & 1\\
0 & 0 & \dots & 0 & 0} & \smatNoB{0 & 0 & \dots & 0 & 0\\
0 & 0 & \dots & 0 & 1}
\end{array}
\right]^{-1}
\end{matrix} \notag \\
& 
= 
\left[
\begin{smallarray}{cc|cc|c}
\rho_i^t \cos(\theta_i t) & \rho_i^t \sin(\theta_i t) & {t \choose t-1} \rho_i^{t-1} \cos(\theta_i (t-1)) & {t \choose t-1} \rho_i^{t-1} \sin(\theta_i (t-1)) & \dots \\
-\rho_i^t \sin(\theta_i t) & \rho_i^t \cos(\theta_i t) &  -{t \choose t-1} \rho_i^{t-1} \sin(\theta_i (t-1)) &  {t \choose t-1} \rho_i^{t-1} \cos(\theta_i (t-1)) & \dots \\ \hline
0 & 0 & \rho_i^t \cos(\theta_i t) & \rho_i^t \sin(\theta_i t) & \dots \\
0 & 0 & -\rho_i^t \sin(\theta_i t) & \rho_i^t \cos(\theta_i t) & \dots \\ \hline
\vdotsS & \vdotsS & \vdotsS & \vdotsS & \\ \hline
0 & 0 & 0 & 0 &  \dots \\
0 & 0 & 0 & 0 & \dots \\\hline
0 & 0 & 0 & 0 & \dots \\
0 & 0 & 0 & 0 & \dots
\end{smallarray}
\right. \notag \\
& \qquad
\left.
\begin{smallarray}{c|cc}
\dots & {t \choose t-k_i+1} \rho_i^{t-k_i+1} \cos(\theta_i (t-k_i+1)) & {t \choose t-k_i+1} \rho_i^{t-k_i+1} \sin(\theta_i (t-k_i+1))\\
\dots & -{t \choose t-k_i+1} \rho_i^{t-k_i+1} \sin(\theta_i (t-k_i+1)) & {t \choose t-k_i+1} \rho_i^{t-k_i+1} \cos(\theta_i (t-k_i+1)) \\ \hline
\dots & {t \choose t-k_i+2} \rho_i^{t-k_i+2} \cos(\theta_i (t-k_i+2)) & {t \choose t-k_i+2} \rho_i^{t-k_i+2} \sin(\theta_i (t-k_i+2)) \\
\dots & -{t \choose t-k_i+2} \rho_i^{t-k_i+2} \sin(\theta_i (t-k_i+2)) & {t \choose t-k_i+2} \rho_i^{t-k_i+2} \cos(\theta_i (t-k_i+2))\\ \hline
 & \vdotsS & \vdotsS\\ \hline
\dots &{t \choose t-1}  \rho_i^{t-1} \cos(\theta_i (t-1)) & {t \choose t-1} \rho_i^{t-1} \sin(\theta_i (t-1))\\
\dots & -{t \choose t-1} \rho_i^{t-1} \sin(\theta_i (t-1)) & {t \choose t-1} \rho_i^{t-1} \cos(\theta_i (t-1))\\\hline
\dots & \rho_i^t \cos(\theta_i t) & \rho_i^t \sin(\theta_i t)\\
\dots & -\rho_i^t \sin(\theta_i t) & \rho_i^t \cos(\theta_i t)
\end{smallarray}
\right] \notag \\
& =: \hat{J}^{\textup{cc}}_{2 k_i}(t,\rho_i,\theta_i) \label{jordan_block_power_nonreal_realified}
\end{align}
where we used that the inverse of a permutation matrix is its transpose.
In summary, the similarity transformations in (i) and (ii) yield a similarity matrix $\hat{\mathcal{T}}$ such that, from \eqref{jordan_block_power} and \eqref{jordan_block_power_nonreal_realified}, for each $t \in \Z_{\ge 0}$,
\begin{align*}
& \hat{J}(t) := \hat{\mathcal{T}}^{-1} J^t \hat{\mathcal{T}} = 
J_{k_1}(\lambda_1)^t \oplus \dots \oplus J_{k_r}(\lambda_r)^t \\
& \qquad \oplus 
\hat{J}^{\textup{cc}}_{2 k_{r+1}}(t,\rho_{r+1},\theta_{r+1}) \oplus \dots \oplus
\hat{J}^{\textup{cc}}_{2 k_{r+s}}(t,\rho_{r+s},\theta_{r+s}).
\end{align*}  
We are now in the position to obtain the solution $w(\cdot)$ with initial condition $w(0)=\hat{w}$ to $w^+ = S w$.
We consider the change of coordinates $\zeta = \hat{\mathcal{T}}^{-1} \mathcal{T}^{-1} w$ and $\hat{\zeta} := \hat{\mathcal{T}}^{-1} \mathcal{T}^{-1} \hat{w}$.
Then, for each $t \in \Z_{\ge 0}$,
\begin{align*}
w(t) & = S^t w(0) = (\mathcal{T} J \mathcal{T}^{-1})^t \hat{w} = \mathcal{T} J^t \mathcal{T}^{-1} \hat{w} = \mathcal{T} \hat{\mathcal{T}} \hat{J}(t) \hat{\mathcal{T}}^{-1} \mathcal{T}^{-1} \hat{w}  = \mathcal{T} \hat{\mathcal{T}} \hat{J}(t) \hat{\zeta}.
\end{align*}
Partition $\zeta$ as $\zeta = (\zeta^1, \dots, \zeta^r, \zeta^{r+1}, \dots, \zeta^{r+s})$ where $\zeta^1 \in \R^{k_1}$, \dots, $\zeta^r \in \R^{k_r}$, $\zeta^{r+1}\in \R^{2 k_{r+1}}$, \dots, $\zeta^{r+s} \in \R^{2 k_{r+s}}$.
Then, for each $t \in \Z_{\ge 0}$,
\begin{align*}
\zeta(t) = 
\smat{
\zeta^1(t)\\ \vdotsS \\ \zeta^r(t) \\ \zeta^{r+1}(t) \\ \vdotsS \\ \zeta^{r+s}(t)
}
=\hat{\mathcal{T}}^{-1} \mathcal{T}^{-1} w(t) = \hat{J}(t) \hat{\zeta}
=
\smat{
J_{k_1}(\lambda_1)^t \hat{\zeta}^1 \\ \vdotsS \\ J_{k_r}(\lambda_r)^t \hat{\zeta}^r \\ \hat{J}^{\textup{cc}}_{2 k_{r+1}}(t,\rho_{r+1},\theta_{r+1}) \hat{\zeta}^{r+1} \\ \vdotsS \\ \hat{J}^{\textup{cc}}_{2 k_{r+s}}(t,\rho_{r+s},\theta_{r+s}) \hat{\zeta}^{r+s}
}
\end{align*}
and for $i = 1, \dots, r$,
\begin{align*}
& J_{k_i}(\lambda_i)^t \hat{\zeta}^i \overset{\eqref{jordan_block_power}}{=}
\smat{\lambda_i^t & {t \choose t-1} \lambda_i^{t-1} & {t \choose t-2} \lambda_i^{t-2} & \dots & {t \choose t-k_i+1} \lambda_i^{t-k_i+1}\\
0 & \lambda_i^t & {t \choose t-1} \lambda_i^{t-1} & \dots & {t \choose t-k_i+2}\lambda_i^{t-k_i+2}\\
\vdotsS & \vdotsS & \vdotsS & \ddotsS & \vdotsS \\
0 & 0 & 0 & \dots & {t \choose t-1} \lambda_i^{t-1}\\
0 & 0 & 0 & \dots & \lambda_i^t}
\smat{\hat{\zeta}^i_1\\ \hat{\zeta}^i_2\\ \hat{\zeta}^i_3\\  \vdotsS \\ \hat{\zeta}^i_{k_i}} \\
& = 
\smat{
\hat{\zeta}^i_{k_i} & \hat{\zeta}^i_{k_i-1} & \dots & \hat{\zeta}^i_{2} & \hat{\zeta}^i_{1}\\ 
0 & \hat{\zeta}^i_{k_i} & \dots & \hat{\zeta}^i_{3} & \hat{\zeta}^i_{2}\\ 
\vdotsS & \vdotsS & \ddotsS & \vdotsS & \vdotsS \\
0 & 0 & \dots & \hat{\zeta}^i_{k_i} & \hat{\zeta}^i_{k_i-1}\\
0 & 0 & \dots & 0 & \hat{\zeta}^i_{k_i}
}
\smat{
{t \choose t-k_i+1} \lambda_i^{t-k_i+1} \\
{t \choose t-k_i+2} \lambda_i^{t-k_i+2} \\
\vdotsS \\
{t \choose t-1} \lambda_i^{t-1}\\
\lambda_i^t
}
\overset{\eqref{M-real}}{=:} L_{k_i}(\hat{w}) M_{k_i}(t,\lambda_i)
\end{align*}
and for $i = r+1, \dots, r+s$,
\begin{align*}
& \hat{J}^{\textup{cc}}_{2 k_i}(t,\rho_i,\theta_i) \hat{\zeta}^i  = \hat{J}^{\textup{cc}}_{2 k_i}(t,\rho_i,\theta_i) \left[
\begin{smallarray}{c}
\bmat{\hat{\zeta}^i_{1\textup{c}}\\ \hat{\zeta}^i_{1\textup{s}}} \\ \hline \bmat{\hat{\zeta}^i_{2\textup{c}}\\ \hat{\zeta}^i_{2\textup{s}}} \\ \hline
\vdotsS \\ \hline
\bmat{\hat{\zeta}^i_{k_i\textup{c}}\\ \hat{\zeta}^i_{k_i\textup{s}}}
\end{smallarray}
\right] \\
& \overset{\eqref{jordan_block_power_nonreal_realified}}{=} 
\left[
\begin{smallarray}{cc|c|cc|cc}
\hat{\zeta}^i_{k_i\textup{c}} & \hat{\zeta}^i_{k_i\textup{s}} & \dots & \hat{\zeta}^i_{2\textup{c}} & \hat{\zeta}^i_{2\textup{s}} & \hat{\zeta}^i_{1\textup{c}} & \hat{\zeta}^i_{1\textup{s}}\\
\hat{\zeta}^i_{k_i\textup{s}} & -\hat{\zeta}^i_{k_i\textup{c}} &  \dots &  \hat{\zeta}^i_{2\textup{s}} & -\hat{\zeta}^i_{2\textup{c}} & \hat{\zeta}^i_{1\textup{s}} & -\hat{\zeta}^i_{1\textup{c}} \\ \hline
0 & 0 & \dots & \hat{\zeta}^i_{3\textup{c}} & \hat{\zeta}^i_{3\textup{s}} & \hat{\zeta}^i_{2\textup{c}} & \hat{\zeta}^i_{2\textup{s}} \\
0 & 0 & \dots & \hat{\zeta}^i_{3\textup{s}} & -\hat{\zeta}^i_{3\textup{c}} & \hat{\zeta}^i_{2\textup{s}} & -\hat{\zeta}^i_{2\textup{c}}\\ \hline
\vdotsS & \vdotsS & \ddotsS & \vdotsS & & \vdotsS & \vdotsS \\ \hline
0 & 0 & \dots & 0 &  0 & \hat{\zeta}^i_{k_i\textup{c}} & \hat{\zeta}^i_{k_i\textup{s}}\\
0 & 0 & \dots & 0 & 0 & \hat{\zeta}^i_{k_i\textup{s}} & -\hat{\zeta}^i_{k_i\textup{c}}
\end{smallarray}
\right]
\left[
\begin{smallarray}{c}
{t \choose t-k_i+1} \rho_i^{t-k_i+1} \cos(\theta_i (t-k_i+1)) \\
{t \choose t-k_i+1} \rho_i^{t-k_i+1} \sin(\theta_i (t-k_i+1)) \\ \hline
\vdotsS \\ \hline
{t \choose t-1} \rho_i^{t-1} \cos(\theta_i (t-1)) \\
{t \choose t-1} \rho_i^{t-1} \sin(\theta_i (t-1)) \\ \hline
\rho_i^t \cos(\theta_i t) \\
\rho_i^t \sin(\theta_i t)
\end{smallarray}
\right]\\
& \overset{\eqref{M-complex}}{=:} L_{k_i}(\hat{w}) M_{k_i}(t,\rho_i,\theta_i).
\end{align*}
Note that above we substituted the definitions \eqref{M-real}-\eqref{M-complex} for $M_{k_i}$, $i=1, \dots, r+s$, and $\hat{\zeta} = \hat{\mathcal{T}}^{-1} \mathcal{T}^{-1} \hat{w}$ allows us to define the matrices $\hat{w} \mapsto L_{k_i}(\hat{w})$, $i = 1, \dots, r+s$, as functions of $\hat{w}$.
Hence, for each $t \in \Z_{\ge 0}$ we obtain%
\begin{align*}
& \! w(t) = \mathcal{T} \hat{\mathcal{T}} \zeta(t) = \mathcal{T} \hat{\mathcal{T}} 
\smat{
L_{k_1}(\hat{w}) M_{k_1}(t, \lambda_1)\\
\vdotsS\\
L_{k_r}(\hat{w}) M_{k_r}(t, \lambda_r)\\
L_{k_{r+1}}(\hat{w}) M_{k_{r+1}}(t,\rho_{r+1},\theta_{r+1})\\
\vdotsS\\
L_{k_{r+s}}(\hat{w}) M_{k_{r+s}}(t,\rho_{r+s},\theta_{r+s})\\
}\\
& \! = \mathcal{T} \hat{\mathcal{T}} 
\big(L_{k_1}(\hat{w}) \oplus \dots  \oplus L_{k_r}(\hat{w}) \oplus L_{k_{r+1}}(\hat{w}) \oplus \dots \oplus L_{k_{r+s}}(\hat{w}) \big) \!\! \smat{M_{k_1}(t,\lambda_1)\\ \vdotsS \\ M_{k_r}(t,\lambda_r) \\ M_{k_{r+1}}(t,\rho_{r+1},\theta_{r+1})\\ \vdotsS \\ M_{k_{r+s}}(t,\rho_{r+s},\theta_{r+s})}
\end{align*}
and this coincides with~\eqref{w=TLM} by defining $\tilde{\mathcal{T}} := \mathcal{T} \hat{\mathcal{T}}$, $L(\hat{w}) = L_{k_1}(\hat{w}) \oplus \dots \oplus L_{k_r}(\hat{w}) \oplus L_{k_{r+1}}(\hat{w}) \oplus \dots \oplus L_{k_{r+s}}(\hat{w})$ and by the definition of $M(\cdot)$ in~\eqref{w=TLM}.
\end{proof}

With Claim~\ref{claim:exo_response}, we can prove Lemma~\ref{lemma:factorization_W0}.
Let
\begin{align}
& \mathbf{M} := \bmat{M(\ell) & M(\ell+1) & \dots & M(T)} \in \R^{n_w \times T - \ell+1} \label{boldM} \\
& =
\smat{
M_{k_1}(\ell,\lambda_1) & M_{k_1}(\ell+1,\lambda_1) 
& \dots  & M_{k_1}(T,\lambda_1)
\\ 
\vdotsS & \vdotsS &  & \vdotsS\\ 
M_{k_r}(\ell,\lambda_r) & M_{k_r}(\ell+1,\lambda_r)  &  \ldots & M_{k_r}(T,\lambda_r) \\ 
M_{k_{r+1}}(\ell,\rho_{r+1},\theta_{r+1}) & M_{k_{r+1}}(\ell+1,\rho_{r+1},\theta_{r+1}) &
\dots & M_{k_{r+1}}(T,\rho_{r+1},\theta_{r+1}) 
\\ 
\vdotsS &\vdotsS & &\vdotsS \\
M_{k_{r+s}}(\ell,\rho_{r+s},\theta_{r+s}) &
M_{k_{r+s}}(\ell+1,\rho_{r+s},\theta_{r+s}) & \dots & 
M_{k_{r+s}}(T,\rho_{r+s},\theta_{r+s})
}\notag
\end{align}
where the sub-matrices $M(\cdot)$, $M_{k_i}(\cdot,\cdot)$, $i=1, \dots, r$, and $M_{k_{i}}(\cdot,\cdot,\cdot)$, $i=r+1, \dots, r+s$, are defined respectively in~\eqref{w=TLM}, \eqref{M-real} and \eqref{M-complex}, so $\mathbf{M}$ is known since $S$ is known.
Then, with $\mathbf{M}$ in~\eqref{boldM} and Claim~\ref{claim:exo_response}, we have that
\begin{align*}
W_0 =
\bmat{
       \tilde{\mathcal{T}} L(w(0)) M(\ell) & \dots & \tilde{\mathcal{T}} L(w(0)) M(T)}=
\tilde{\mathcal{T}} L(w(0)) \mathbf{M}
   =: L_0\mathbf{M}.
\end{align*}
So, we have found $L_0$ and $\mathbf{M}$ factorizing $W_0$, which proves Lemma~\ref{lemma:factorization_W0}.

\section{Proof of Lemma~\ref{lemma:factorization_W0_new}}
\label{app:lemma:factorization_W0_new}

From~\eqref{eq:W0},
\begin{align*}
W_0 = \bmat{ w(\ell) & S w(\ell) & \dots & S^{T-\ell} w(\ell) }.
\end{align*}
Since $\newM$ has full row rank, there exists $\mathsf{w} \in \R^{T-\ell+1}$ such that, for each $j \ge 1$,
\begin{align}
w(\ell) &  = \newM \mathsf{w} = \sum_{i=0}^{T-\ell} S^i w_\star \mathsf{w}_{i+1} = \left(\sum_{i=0}^{T-\ell} S^i \mathsf{w}_{i+1} \right) w_\star  =: \rho(S) w_\star \notag \\
S^j w(\ell) & = S^j \left(\sum_{i=0}^{T-\ell} S^i \mathsf{w}_{i+1} \right) w_\star =  \left(\sum_{i=0}^{T-\ell} S^i \mathsf{w}_{i+1} \right) S^j w_\star = \rho(S) S^j w_\star.
\end{align}
Hence,
\begin{align*}
W_0 & = \bmat{ \rho(S) w_\star & \rho(S) S w_\star & \dots & \rho(S) S^{T-\ell} w_\star } \\
& = \rho(S) \bmat{  w_\star & S w_\star & \dots & S^{T-\ell} w_\star } = \rho(S) \mathcal{M}
\end{align*}
and the claim is proven with $\mathcal{L} := \rho(S)$.

\section{Proof of Lemma~\ref{lemma:closed_loop_rep}}
\label{app:lemma:closed_loop_rep}

By $W_0 = \hat{L}_0 \hat{\mathbf{M}}$, \eqref{eq:data-matrices-identity} becomes $\Psi_1 = \bar{\Ab}  \Psi_0 +  \bar{\Bb} U_1 + \bar{\Pb} \Sb \hat{L}_0 \hat{\mathbf{M}}$.
Replace it in
\begin{align*}
& \Psi_1 \mathcal{G} = 
(\bar{\Ab}  \Psi_0 +  \bar{\Bb} U_1 + \bar{\Pb} \Sb \hat{L}_0 \hat{\mathbf{M}}) \mathcal{G} = \bmat{\bar{\Bb} & \bar{\Ab} & \bar{\Pb} \Sb \hat{L}_0 } \smat{U_1\\ \Psi_0\\ \hat{\mathbf{M}} } \mathcal{G} \overset{\eqref{eq:mathcal_G}}{=} \bar{\Ab} +  \bar{\Bb}\mathcal{K}.
\end{align*}

\section{Proof of Theorem~\ref{thm:outputregulation:aux}}
\label{app:thm:outputregulation:aux}

Let us show that items (i)-(ii) of the statement are verified.
To this end, set $\mathcal{G}:= \mathcal{Y} \mathcal{X}^{-1}$ by~\eqref{eq:sdp:x>0} so that, by~\eqref{eq:sdp:mathcal_Y} and \eqref{eq:K}, \eqref{eq:mathcal_G} holds and, by Lemma~\ref{lemma:closed_loop_rep}, the resulting closed-loop \eqref{eq:sys:xi_eta} is
\begin{equation}
\label{omega+,xi+,mu+}
\bmat{
\omega^+\\
\smat{
\xi^+\\
\mu^+
}
}
=
\bmat{
S				& 0\\
\bar{\Pb} \Sb	& \bar{\mathbf{A}}+\bar{\mathbf{B}} \mathcal{K} }
\bmat{
\omega\\
\smat{
\xi\\
\mu
}
}
=
\bmat{
S				& 0\\
\bar{\Pb} \Sb	& \Psi_1 \mathcal{G}}
\bmat{
\omega\\
\smat{
\xi\\
\mu
}
}.
\end{equation}
As for item (i) of the statement, \eqref{omega+,xi+,mu+} with $\omega = 0$ reduces to $\smat{\xi^+\\ \mu^+} = \Psi_1 \mathcal{G} \smat{\xi\\ \mu}$, which is globally asymptotically stable since $\Psi_1 \mathcal{G} = \Psi_1 \mathcal{Y} \mathcal{X}^{-1}$ is Schur by \eqref{eq:sdp:x>0} and \eqref{eq:sdp:schur}.
We then show item (ii) of the statement. Since the intersection of the spectra of $\Psi_1 \mathcal{G}$ and of $S$ is empty by Assumption~\ref{ass:S}, the Sylvester equation $(\Psi_1 \mathcal{G}) \Pi - \Pi S = - \bar{\Pb} \Sb$ has a solution $\Pi$ by \cite[Corollary A.1.1]{knobloch2012topics-in-control-theory}, i.e., there exists $\Pi =: \smat{\Pi_\xi\\ \Pi_\mu}$ such that
\begin{align}
\label{sylv.eq}
\bmat{
\Pi_\xi\\
\Pi_\mu }
S
=
\bar{\Pb} \Sb + \Psi_1 \mathcal{G} 
\bmat{
\Pi_\xi\\
\Pi_\mu }
\overset{\eqref{eq:sys:xi_eta:barAbarB},\eqref{omega+,xi+,mu+}}{=} \bmat{\Lb Z_{2} \\ GZ_{2}} \Sb + (\bar{\Ab} +  \bar{\Bb}\mathcal{K}) \bmat{
\Pi_\xi\\
\Pi_\mu }
.
\end{align}
With the partition $\mathcal{K} = \smat{\mathcal{K}_\xi & \mathcal{K}_\mu}$, $\mathcal{K}_\xi \in \mathbb{R}^{m \times (m+p)\ell}$ and $\mathcal{K}_\mu \in \mathbb{R}^{m \times pd}$, write $\bar{\Ab} +  \bar{\Bb}\mathcal{K} \overset{\eqref{eq:sys:xi_eta:barAbarB}}{=} \smat{
\Ab + \Bb \mathcal{K}_\xi & \Bb \mathcal{K}_\mu\\
G Z_1 & \Phi } $ so that the second block row in~\eqref{sylv.eq} is
\begin{align}
\label{sylv.eq.2nd.row}
\Pi_\mu S = G( Z_2 \Sb + Z_1 \Pi_\xi ) + \Phi \Pi_\mu.
\end{align}
It can be shown that, for $\Phi$ and $G$ defined as in~\eqref{eq:im-matrices}, \eqref{sylv.eq.2nd.row} implies
\begin{align}
\label{sylv.eq.2nd.row.implication}
Z_2 \Sb + Z_1 \Pi_\xi = 0
\end{align}
by following the same steps as in \cite[Lemma 4.3 and its proof]{isidori2017lectures}.
Indeed, let $\Pi_\mu$ be partitioned consistently with the partition of $\Phi$ as
\begin{align*}
\Pi_\mu =: \smat{ \Pi_{\mu,1} \\ \vdotsS \\ \Pi_{\mu,d}}.
\end{align*}
By~\eqref{eq:im-matrices}, \eqref{sylv.eq.2nd.row} rewrites as
\begin{align*}
\smat{ \Pi_{\mu,1} S \\ \vdotsS \\ \Pi_{\mu,d-1} S \\ \Pi_{\mu,d} S } = 
\smat{\Pi_{\mu,2} \\ \vdotsS \\ \Pi_{\mu,d} \\ -s_0 \Pi_{\mu,1} - s_1 \Pi_{\mu,2} - \dots - s_{d-1} \Pi_{\mu,d} + Z_2 \Sb + Z_1 \Pi_\xi}.
\end{align*}
The first $d-1$ block rows yield, for $i = 1, \dots, d$, $\Pi_{\mu,i} = \Pi_{\mu,1} S^{i-1}$; by substituting these relations, the last block row rewrites
\begin{align*}
& \Pi_{\mu,d} S = \Pi_{\mu,1} S^d = - s_0 \Pi_{\mu,1} - s_1 \Pi_{\mu,1} S - \dots - s_{d-1} \Pi_{\mu,1} S^{d-1} + Z_2 \Sb + Z_1 \Pi_\xi \\
& \iff Z_2 \Sb + Z_1 \Pi_\xi  = \Pi_{\mu,1} (S^d + s_{d-1} S^{d-1} + \dots + s_1 S + s_0 I) = 0  
\end{align*}
since the minimal polynomial of $S$ annihilates $S$.

With classical arguments from output regulation as in, e.g., \cite[p.~13-14]{knobloch2012topics-in-control-theory}, we consider the global and invertible change of variables $(\omega,\tilde{\xi},\tilde{\mu}):= (\omega,\xi - \Pi_\xi \omega,\mu - \Pi_\mu \omega)$ in~\eqref{omega+,xi+,mu+} and in the output equation $\varphi = Z_1 \xi + Z_2 \Sb \omega$ from~\eqref{eq:sys:xi_eta:phi}; this change of variables rewrites them as
\begingroup
\thinmuskip=1mu plus 1mu minus 1mu
\medmuskip=1mu plus 2mu minus 2mu
\thickmuskip=1mu plus 3mu minus 3mu
\begin{align*}
\omega^+ & = S \omega\\
\smat{\tilde{\xi}^+ \\ \tilde{\mu}^+} &  \overset{\eqref{omega+,xi+,mu+}}{=}  (\bar{\Ab} + \bar{\Bb} \mathcal{K}) \smat{\tilde{\xi}\\ \tilde{\mu}} + \left( (\bar{\Ab} + \bar{\Bb} \mathcal{K}) \smat{\Pi_\xi\\ \Pi_\mu} + \bar{\Pb} \Sb - \smat{\Pi_\xi\\ \Pi_\mu} S \right) \omega \overset{\eqref{sylv.eq}}{=}  (\bar{\Ab} + \bar{\Bb} \mathcal{K}) \smat{\tilde{\xi}\\ \tilde{\mu}} \\
\varphi & = Z_1 \tilde{\xi} + (Z_1 \Pi_\xi + Z_2 \Sb) \omega \overset{\eqref{sylv.eq.2nd.row.implication}}{=} Z_1 \tilde{\xi}.
\end{align*}
\endgroup
Since $\bar{\Ab} +  \bar{\Bb}\mathcal{K}$ was shown to be Schur,  we have that for each initial condition $(\omega(0),\tilde{\xi}(0),\tilde{\mu}(0)):= (\omega(0),\xi(0) - \Pi_\xi \omega(0),\mu(0) - \Pi_\mu \omega(0))$, $\lim_{k \to \infty} \smat{\tilde{\xi}(k)\\ \tilde{\mu}(k)} = \smat{0\\ 0}$ and $\lim_{k \to \infty} \varphi(k) = 0$, so that item (ii) of the statement is verified.

\section{Proof of Proposition~\ref{proposition:ex_remark}}
\label{app:proposition:ex_remark}

The factorization $W_0 = \hat{L}_0 \hat{\mathbf{M}}$, the matrices $\Obs$, $\Tu$, $\Tw$ in~\eqref{eq:pre:matrix:OTR:O}-\eqref{eq:pre:matrix:OTR:T_P}, and the definition $\mathcal{S}_S := \smat{S^{-\ell} \\ \vdotsS \\ S^{-1}}$ yield, see \eqref{eq:claim1:yellxy:yell}, that
\begingroup
\thinmuskip=1mu plus 1mu minus 1mu
\medmuskip=2mu plus 2mu minus 2mu
\thickmuskip=3mu plus 3mu minus 3mu
\begin{align*}
\underbrace{
\left[
\begin{smallarray}{cccc}
    y(0) & y(1) & \dotsS & y(T-\ell )\\
    \vdotsS & \vdotsS &   & \vdotsS \\
    y(\ell -1) & y(\ell ) & \dotsS & y(T-1) \\
    \hline
    u(0) & u(1) & \dotsS & u(T-\ell) \\
    \vdotsS & \vdotsS &  & \vdotsS\\
    u(\ell -1) & u(\ell) & \dotsS & u(T-1)\\
    \hline
    \eta(\ell) & \eta(\ell + 1) & \dotsS & \eta(T)\\
    \hline
    \multicolumn{4}{c}{\hat{\mathbf{M}}}
\end{smallarray}
\right]}_{=\smat{
    \Psi_0\\
    \hat{\mathbf{M}}
}} &=
\underbrace{
\left[
\begin{smallarray}{cccc}
    \Obs & \Tu & \Tw \mathcal{S}_S \hat{L}_0 & 0 \\
    \hline
    0 & I_{m \ell} & 0 & 0 \\
    \hline
    0 & 0 & 0 & I_{pd} \\
    \hline
    0 & 0 & I_{\hat{n}_w} & 0
\end{smallarray}
\right]
}_{=:\mathcal{N}_1}
\underbrace{
\left[
\begin{smallarray}{cccc}
    x(0) & x(1) & \dotsS & x(T-\ell) \\
    \hline
    u(0) & u(1) & \dotsS & u(T-\ell) \\
    \vdotsS & \vdotsS &  & \vdotsS \\
    u(\ell-1) & u(\ell) & \dotsS & u(T-1) \\
    \hline
    \multicolumn{4}{c}{\hat{\mathbf{M}}} \\  
    \hline
    \eta(\ell) & \eta(\ell + 1) & \dotsS & \eta(T)
\end{smallarray}
\right]}_{=:\mathcal{N}_2}
\end{align*}
\endgroup
with
$\mathcal{N}_1 \in \R^{ p\ell + m\ell + pd + \hat{n}_w \times n + m\ell + \hat{n}_w + pd}$ and $\mathcal{N}_2 \in \R^{n + m\ell + \hat{n}_w + pd \times T - \ell + 1}$.
By the hypothesis $p \ell > n$, we have
\begin{align}
\label{conseq_pl_>_n}
p\ell + m\ell + \hat{n}_w + pd>
n + m\ell + \hat{n}_w + pd
\end{align}
where the left hand side is the number of rows of $\smat{\Psi_0\\ \hat{\mathbf{M}}}$ and the right hand side is the number of columns of $\mathcal{N}_1$.
By the factorization $\smat{\Psi_0\\ \hat{\mathbf{M}}} = \mathcal{N}_1 \mathcal{N}_2$ and rank inequalities \cite[\S 0.4.5]{horn2013matrix}, $\rank \smat{\Psi_0\\ \hat{\mathbf{M}}} \le \rank \mathcal{N}_1 \le n + m\ell + \hat{n}_w + pd$ so that, by~\eqref{conseq_pl_>_n}, $\smat{\Psi_0\\ \hat{\mathbf{M}}}$ is \emph{not} full row rank.
By the hypothesis of $\hat{\mathbf{M}}$ full row rank, there is a \emph{strict}, possibly empty, subset of the rows of $\Psi_0$, called $\Psi_{0,\rm row}$, such that $\smat{\Psi_{0,\rm row}\\ \hat{\mathbf{M}}}$ has full row rank.
Let $\Psi_{1,\rm row}$ be the remaining, possibly all, rows of $\Psi_0$ so that there is a permutation matrix $\mathcal{Q}$ satisfying
$
\Psi_0 = \mathcal{Q} \smat{\Psi_{0,\rm row}\\ \Psi_{1,\rm row} }$.
Since the rows of $\Psi_0$ contained in $\Psi_{1,\rm row}$ are linearly dependent on the rows of $\Psi_{0,\rm row}$ and $\hat{\mathbf{M}}$, there are some matrices $\mathcal{R}_0$ (possibly with no columns) and $\mathcal{R}_{\hat{\mathbf{M}}}$ such that
\begin{align*}
\Psi_{1,\rm row} & = \mathcal{R}_0 \Psi_{0,\rm row} + \mathcal{R}_{\hat{\mathbf{M}}} \hat{\mathbf{M}} \\
\Psi_0 & = \mathcal{Q} \smat{\Psi_{0,\rm row}\\ \Psi_{1,\rm row} } = \mathcal{Q} \smat{I & 0\\ \mathcal{R}_0 & \mathcal{R}_{\hat{\mathbf{M}}} } \smat{\Psi_{0,\rm row}\\ \hat{\mathbf{M}} } =  
\mathcal{Q} \smat{I \\ \mathcal{R}_0 } \Psi_{0,\rm row} +
\mathcal{Q} \smat{ 0\\ \mathcal{R}_{\hat{\mathbf{M}}} } \hat{\mathbf{M}} \\
& =: \mathcal{S}_{\Psi_0} \Psi_{0,\rm row} + \mathcal{S}_{\hat{\mathbf{M}}} \hat{\mathbf{M}}
\end{align*}
where $\mathcal{S}_{\Psi_0}$ is full column rank (possibly with no columns) since $\mathcal{Q}$ is invertible and $\smat{I \\ \mathcal{R}_0 }$ is full column rank (possibly with no columns).
Then, if \eqref{eq:sdp} is feasible for some $\mathcal{X}$ and $\mathcal{Y}$, \eqref{eq:sdp:x>0} and \eqref{eq:sdp:mathcal_Y} yield $\mathcal{X} \succ 0$, $I_{(m+p)\ell+pd} = \Psi_0 \mathcal{Y}  \mathcal{X}^{-1}$, $0 = \hat{\mathbf{M}} \mathcal{Y} \mathcal{X}^{-1}$ and, thus, $I_{(m+p)\ell+pd} = (\mathcal{S}_{\Psi_0} \Psi_{0,\rm row} + \mathcal{S}_{\hat{\mathbf{M}}} \hat{\mathbf{M}} ) \mathcal{Y}  \mathcal{X}^{-1} = \mathcal{S}_{\Psi_0} \Psi_{0,\rm row} \mathcal{Y}  \mathcal{X}^{-1}$ and $\rank I_{(m+p)\ell+pd} \le  \rank \mathcal{S}_{\Psi_0}$.
Since $\mathcal{S}_{\Psi_0}$ has more rows than columns and is full column rank, the last inequality is a contradiction and we conclude that \eqref{eq:sdp} is infeasible.

\section{Proof of Lemma~\ref{lem:rel_sol_sys+exo_auxsys+auxexo}}
\label{app:lem:rel_sol_sys+exo_auxsys+auxexo}

    For $(A,C)$ observable, we need to show that for each $(\hat{w},\hat{x},\hat{\chi})$ and sequence $\{u(k)\}_{k = 0}^{\infty}$,
    \begin{itemize}[leftmargin=*]
        \item [(i)] 
        the solution $(w(\cdot),x(\cdot),\chi(\cdot))$ to \eqref{eq:pre:sys_cl_xy} with initial condition $(w(0),x(0),\chi(0))\!=\!(\hat{w},\hat{x},\hat{\chi})$ and with input $\{u(k)\}_{k = 0}^{\infty}$, and the corresponding output response $y(\cdot) = C x(\cdot) + Q w(\cdot)$ satisfy, for all $k \ge \ell$,
    \begin{equation*}
        \chi(k) = (y(k - \ell),\dots,y(k-1),u(k - \ell),\dots,u(k - 1));
    \end{equation*}
    \item [(ii)]
    there exists $\hat{\xi}$ such that: such solution $\big(w(\cdot),x(\cdot),\chi(\cdot)\big)$, the corresponding output response $y(\cdot) = C x(\cdot) + Q w(\cdot)$, the solution $(\omega(\cdot),\xi(\cdot))$ to \eqref{eq:aux_all_except_control} with initial condition $(\omega(\ell), \xi(\ell)) = (S^\ell \hat{w} , \hat{\xi})$ and with input $
    \{v(k)\}_{k = \ell}^{\infty} = \{u(k)\}_{k = \ell}^{\infty}$, the corresponding output response $\varphi(\cdot) = Z_1 \xi(\cdot) + Z_2 \Sb \omega(\cdot)$ satisfy, for all $k \ge \ell$,%
\begin{subequations}
       \begin{align}
        &\xi(k) = (y(k - \ell),\dots,y(k - 1),u(k - \ell),\dots,u(k - 1)), \label{eq:lem3:xi_y:xi}\\
         &\varphi(k) = y(k) \label{eq:lem3:xi_y:y}.
    \end{align}
   \end{subequations}
    \end{itemize}
    
As for (i), we use mathematical induction.
As for the base case, we need to show that $\chi(\ell) = \big( y(0), \dots, y(\ell-1), u(0), \dots, u(\ell-1) \big)$.
Indeed, by~\eqref{eq:pre:sys_cl_xy},
\begin{align*}
            \chi(1) = \left[
            \begin{smallarray}{c}
               \hat{\chi}_2 \\
                \vdotsS\\
                \hat{\chi}_\ell\\
                y(0)\\
                \hline
                  \hat{\chi}_{\ell + 2} \\
                \vdotsS\\
                \hat{\chi}_{\ell + \ell}\\
                u(0)     
            \end{smallarray}
            \right],\dots,
            \chi(\ell - 1) = \left[
            \begin{smallarray}{c}
                   \hat{\chi}_\ell \\
               y(0)\\
                \vdotsS\\
                y(\ell - 2)\\
                \hline
                   \hat{\chi}_{\ell + \ell} \\
                u(0)\\
                \vdotsS\\
                u(\ell - 2) 
            \end{smallarray}
            \right],
            \chi(\ell) = \left[
            \begin{smallarray}{c}
               y(0)\\
                \vdotsS\\
                y(\ell - 1)\\
                \hline
                u(0)\\
                \vdotsS\\
                u(\ell - 1)         
            \end{smallarray}
            \right].
\end{align*}
As for the induction step, we suppose that, for some $k \ge \ell$,
        \begin{equation}
        \label{eq:induction_step}
            \chi(k) = \left[
            \begin{smallarray}{c}
               y(k - \ell)\\
                \vdotsS\\
                y(k - 1)\\
                \hline
                u(k - \ell)\\
                \vdotsS\\
                u(k - 1)        
            \end{smallarray}
            \right],
        \end{equation}
and we need to show that $
            \chi(k+1) = \left[
            \begin{smallarray}{c}
               y(k - \ell + 1)\\
                \vdotsS\\
                y(k)\\
                \hline
                u(k - \ell + 1)\\
                \vdotsS\\
                u(k)       
            \end{smallarray}
            \right]$.
Indeed,
        \begin{align*}
            \chi(k + 1) & \overset{\eqref{eq:pre:sys_cl_xy:chi}}{=} \Fb \chi(k) + \Lb y(k) + \Bb u(k)  =  \left[
            \begin{smallarray}{c}
               \chi_2(k)\\
                \vdotsS\\
                \chi_\ell(k)\\
                y(k)\\
                \hline
                \chi_{\ell + 2}(k)\\
                \vdotsS\\
                \chi_{\ell + \ell}(k)\\
                u(k)        
            \end{smallarray}
            \right] 
            \overset{\eqref{eq:induction_step}}{=}
            \left[
            \begin{smallarray}{c}
               y(k - \ell + 1)\\
                \vdotsS\\
                y(k)\\
                \hline
                u(k - \ell + 1)\\
                \vdotsS\\
                u(k)        
            \end{smallarray}
            \right].
        \end{align*}

As for (ii), we start noting some properties of the solutions to~\eqref{eq:pre:sys_cl_xy} and \eqref{eq:aux_all_except_control}.
For each $\hat{w}$, the solution $w(\cdot)$ to~\eqref{eq:pre:sys_cl_xy:w} with initial condition $w(0)= \hat{w}$ satisfies, for each $k \ge 0$,
\begin{align}
\label{eq:lem3:sol_exosys}
w(k) = S^k \hat{w}.
\end{align}
Moreover, we can apply Claim~\ref{claim:1:xy} to the components $(w(\cdot),x(\cdot))$ of the solution $\big(w(\cdot),x(\cdot),\chi(\cdot)\big)$ to~\eqref{eq:pre:sys_cl_xy} and obtain that for each $(\hat{w},\hat{x})$ and sequence $\{u(k)\}_{k = 0}^{\infty}$, the solution $(w(\cdot), x(\cdot))$ to~\eqref{eq:pre:sys_cl_xy} with initial condition $(w(0), x(0))\!=\!(\hat{w},\hat{x})$ and input $\{u(k)\}_{k = 0}^{\infty}$ satisfies, for each $k \ge \ell$
    \begin{equation}\label{eq:lem3:y}
        \left[
            \begin{smallmatrix}
                y(k - \ell)\\
                \vdotsS\\
                y(k - 1)
            \end{smallmatrix}
            \right] = \Obs x(k - \ell) + \Tu \left[
            \begin{smallmatrix}
                u(k - \ell)\\
                \vdotsS\\
                u(k - 1)
            \end{smallmatrix}
            \right] + \Tw \left[
            \begin{smallmatrix}
                w(k - \ell)\\
                \vdotsS\\
                w(k - 1)
            \end{smallmatrix}
            \right].
    \end{equation} 
Finally, the solution $\omega(\cdot)$ to~\eqref{eq:aux_all_except_control:omega} with initial condition $\omega(\ell)= S^\ell \hat{w}$ satisfies for each $k \ge \ell$
\begin{align}
\label{eq:lem3:sol_omega}
\Sb \omega(k) & \overset{\eqref{eq:pre:matrix_SABZ:S}}{=} \smat{S^{-\ell} \\ \vdotsS \\ S^{-1} \\ I_{n_w} } \omega(k) \overset{\eqref{eq:aux_all_except_control:omega}}{=}  \smat{S^{-\ell} \\ \vdotsS \\ S^{-1} \\ I_{n_w} } S^{k-\ell} S^\ell \hat{w}
= \smat{S^{k-\ell} \\ \vdotsS \\ S^{k-1} \\ S^k } \hat{w}   \overset{\eqref{eq:lem3:sol_exosys}}{=} \smat{w(k-\ell) \\ \vdotsS \\ w(k-1) \\ w(k) }.
\end{align}
With these properties in mind, we can prove (ii). To this end, for each $(\hat{w},\hat{x})$ and $\{u(k)\}_{k = 0}^{\infty}$, select $\hat{\xi} = (\hat{\xi}_1,\dots,\hat{\xi}_{\ell},\hat{\xi}_{\ell + 1},\dots,\hat{\xi}_{\ell + \ell})$ such that 
\begin{align}\label{eq:lem3:hat_xi_ini}
\smat{
\hat{\xi}_1\\
\vdotsS\\
\hat{\xi}_\ell
}
= \Obs \hat{x} + \Tu 
\smat{
u(0)\\
\vdotsS\\
u(\ell - 1)
} + \Tw
\smat{
I \\
\vdotsS\\
S^{\ell - 1}
}\hat{w}
\text{ and }
\smat{
\hat{\xi}_{\ell + 1}\\
\vdotsS\\
\hat{\xi}_{\ell + \ell} 
}
=
\smat{
u(0)\\
\vdotsS\\
u(\ell - 1)
}.
\end{align}
Let us prove \eqref{eq:lem3:xi_y:xi} by mathematical induction.
As for the base case, we need to show that $\xi(\ell) = (y(0),\dots,y(\ell - 1),u(0),\dots,u(\ell - 1))$.
Indeed,
\begin{align*}
\xi(\ell) & = \hat{\xi} 
\overset{\eqref{eq:lem3:hat_xi_ini}}{=} 
\left[
\begin{array}{c}
\Obs\hat{x} + \Tu \smat{ u(0)\\ \vdotsS\\ u(\ell - 1) } + \Tw \smat{
I \\
\vdotsS\\
S^{\ell - 1}
}\hat{w}\\
\hline
\smat{ u(0)\\ \vdotsS\\ u(\ell - 1) }
\end{array}
\right]\\
& \overset{\eqref{eq:lem3:sol_exosys}}{=} 
\left[
\begin{array}{c}
\Obs\hat{x} + \Tu \smat{ u(0)\\ \vdotsS\\ u(\ell - 1) } + \Tw \smat{w(0)\\ \vdotsS \\ w(\ell-1)} \\
\hline
\smat{ u(0)\\ \vdotsS\\ u(\ell - 1) }
\end{array}
\right]
        \overset{\eqref{eq:lem3:y}}{=} \left[
        \begin{smallarray}{c}
               y(0)\\
                \vdotsS\\
                y(\ell - 1)\\
            \hline   
                u(0)\\
                \vdotsS\\
                u(\ell - 1)
        \end{smallarray}
        \right]. 
    \end{align*}
As for the induction step, we suppose that for $k \ge \ell$
    \begin{align}\label{eq:lem3:xik=yu}
        \xi(k) = \left[
        \begin{smallarray}{c}
        \xi_1(k)\\
        \vdotsS\\
        \xi_\ell(k)\\
        \hline
        \xi_{\ell + 1}(k)\\
        \vdotsS\\
        \xi_{\ell + \ell}(k)
        \end{smallarray}
        \right] = \left[
        \begin{smallarray}{c}
        y(k-\ell)\\
        \vdotsS\\
        y(k-1)\\
        \hline
        u(k-\ell)\\
        \vdotsS\\
        u(k-1)
        \end{smallarray}
        \right]
    \end{align}
    and we need to show that $\xi(k + 1) = (y(k - \ell + 1),\dots,y(k),u(k - \ell + 1),\dots, u(k))$.
From~\eqref{eq:aux_all_except_control:xi} we have that
\begin{align*}
&         \xi(k + 1) = \left[
        \begin{smallarray}{c}
             \xi_1(k+1)\\
                \vdotsS\\
                \xi_{\ell -1}(k+1)\\
                \xi_\ell(k+1)\\
            \hline
                \xi_{\ell + 1}(k+1)\\
                \vdotsS\\
                \xi_{\ell + \ell - 1}(k+1)\\
                \xi_{\ell + \ell}(k+1)
        \end{smallarray}
        \right]
        = \Ab \xi(k) + \Bb v(k) + \Lb Z_2 \Sb \omega(k) \! = \! \left[
        \begin{smallarray}{c}
                \xi_2(k)\\
                \vdotsS\\
                \xi_{\ell}(k)\\
                Z_1 \xi(k) + Z_2 \Sb \omega(k)\\
            \hline
                \xi_{\ell + 2}(k)\\
                \vdotsS\\
                \xi_{\ell + \ell}(k)\\
                v(k)
        \end{smallarray}
        \right]\\
        & \overset{\eqref{eq:lem3:xik=yu},\eqref{eq:lem3:sol_omega}}{=} \left[
        \begin{smallarray}{c}
               y(k - \ell + 1)\\
                \vdotsS\\
                y(k - 1)\\
                Z_1 \left(y(k - \ell),\dots,y(k - 1),u(k - \ell),\dots,u(k - 1)\right) + Z_2 \left(w(k - \ell),\dots,w(k) \right)\\
            \hline  
                u(k - \ell +1)\\
                \vdotsS\\
                u(k-1)\\
                v(k)
        \end{smallarray}
        \right]\\
        & \overset{\eqref{eq:pre:matrix_SABZ:Z}}{=} \left[
        \begin{smallarray}{c}
               y(k - \ell + 1)\\
                \vdotsS\\
                y(k - 1)\\
                CA^\ell \Obs^{\tu{L}}
                \smat{
                    y(k - \ell)\\
                    \vdotsS\\
                    y(k - 1)}
                + (C \Ru - CA^\ell \Obs^{\tu{L}
                }\Tu)
                \smat{
                    u(k - \ell)\\
                    \vdotsS\\
                    u(k - 1)} + \smat{
C\Rw - CA^{\ell} \Obs^{\tu{L}}\Tw & Q}  \smat{
                    w(k - \ell)\\
                    \vdotsS\\
                    w(k-1)\\
                    w(k)}\\
            \hline 
                u(k - \ell +1)\\
                \vdotsS\\
                u(k-1)\\
                u(k)
        \end{smallarray}
        \right]
\end{align*}
    where the last equality follows from the statement since $v(j) = u(j)$ for all $j \ge \ell$.
From the last equation, the induction step is proven if we show that 
\begin{align}\label{eq:lem2:yk}
CA^{\ell} \Obs^{\tu{L}}
\smat{
y(k-\ell)\\
\vdotsS\\
y(k-1)} 
+ (C\Ru - CA^{\ell} \Obs^{\tu{L}}\Tu)
\smat{
u(k-\ell)\\
\vdotsS \\
u(k-1)
} \\
+ \smat{
C\Rw - CA^{\ell} \Obs^{\tu{L}}\Tw & Q}
\smat{
w(k-\ell)\\
\vdotsS \\
w(k-1)\\
w(k)} = y(k). \notag
    \end{align}
This holds by~\eqref{eq:claim1:yellxy:y} in Claim~\ref{claim:1:xy} and proves \eqref{eq:lem3:xi_y:xi}.  
    Let us prove \eqref{eq:lem3:xi_y:y}.
The solution $y$ satisfies \eqref{eq:claim1:yellxy:y} in Claim~\ref{claim:1:xy} for each $k \ge \ell$ and, by~\eqref{eq:pre:matrix_SABZ:Z}, for each $k \ge \ell$,
\begin{align*}
y(k) & = Z_1 \big(y(k - \ell),\dots,y(k - 1),u(k - \ell),\dots,u(k - 1)\big) \\
&  + Z_2 \big(w(k - \ell),\dots,w(k) \big)  \overset{\eqref{eq:lem3:xi_y:xi},\eqref{eq:lem3:sol_omega}}{ =} Z_1 \xi(k) + Z_2 \Sb \omega(k)\overset{\eqref{eq:aux_all_except_control:varphi}}{=} \varphi(k).
\end{align*}
Since we have shown \eqref{eq:lem3:xi_y:xi} and \eqref{eq:lem3:xi_y:y}, (ii) holds.

\section{Proof of Theorem~\ref{thm:outreg_sys=outreg_auxsys}}
\label{app:thm:outreg_sys=outreg_auxsys}

Consider the closed-loop \eqref{eq:sys:aux}, \eqref{eq:ctrl:aux} given by
\begin{subequations}\label{eq:aux_all_cl}
    \begin{align}
\omega^+ &= S \omega \label{eq:aux_all_cl:omega}\\
\xi^+ & = \Ab \xi + \Bb v  + \Lb Z_{2} \Sb \omega \label{eq:aux_all_cl:xi}\\
\varphi &= Z_1 \xi + Z_2 \Sb \omega \label{eq:aux_all_cl:varphi}\\
 \mu^+ &= \Phi \mu + G \varphi\label{eq:aux_all_cl:eta}\\
 v & = \mathcal{K}\smat{\xi\\ \mu} .\label{eq:aux_all_cl:v}
\end{align}
\end{subequations}
For each $(\hat{w}, \hat{x}, \hat{\chi}, \hat{\eta})$, the solution $(w(\cdot),x(\cdot),\chi(\cdot),\eta(\cdot))$ to~\eqref{eq:sys_cl_xy2}
with initial condition $(w(0),x(0),\chi(0),\eta(0)) = (\hat{w},\hat{x},\hat{\chi},\hat{\eta})$ has $\{u(k)\}_{k = 0}^{\infty} = \Big\{\mathcal{K} \smat{\chi(k)\\ \eta(k)} \Big\}_{k = 0}^{\infty}$.
Lemma~\ref{lem:rel_sol_sys+exo_auxsys+auxexo} yields that for such $(\hat{w}, \hat{x}, \hat{\chi})$ and $\{u(k)\}_{k = 0}^{\infty}= \Big\{\mathcal{K} \smat{\chi(k)\\ \eta(k)} \Big\}_{k = 0}^{\infty}$, there exists $\hat{\xi}$ as in \eqref{eq:lem3:hat_xi_ini} such that:
\begin{itemize}[leftmargin=*]
\item the solution $(w(\cdot),x(\cdot),\chi(\cdot))$ to~\eqref{eq:sys_cl_xy2:w}-\eqref{eq:sys_cl_xy2:chi} with initial condition equal to $(w(0),x(0),\chi(0)) = (\hat{w},\hat{x},\hat{\chi})$ and input $\{u(k)\}_{k = 0}^{\infty} = \Big\{\mathcal{K} \smat{\chi(k)\\ \eta(k)} \Big\}_{k = 0}^{\infty}$ and 
\item the corresponding output response $y(\cdot) = C x(\cdot) + Q w(\cdot)$,
\item the solution $(\omega(\cdot), \xi(\cdot))$ to~\eqref{eq:aux_all_cl:omega}-\eqref{eq:aux_all_cl:varphi} with initial condition $(\omega(\ell), \xi(\ell)) = (S^\ell \hat{w}, \hat{\xi})$ and input $\{v(k)\}_{k = \ell}^{\infty} = \{u(k)\}_{k = \ell}^{\infty}$ and
\item the corresponding output response $\varphi(\cdot) = Z_1 \xi(\cdot) + Z_2 \Sb \omega(\cdot)$
\end{itemize}
satisfy for each $k \ge \ell$
\begin{equation}
\label{eq:lem8:xi_beta_alpha}
\xi(k) = \left[\begin{smallarray}{c}
        y(k - \ell)\\
        \vdotsS\\
        y(k - 1)\\
        \hline
        u(k - \ell)\\
        \vdotsS\\
        u(k - 1)
        \end{smallarray}
        \right] = \chi(k), \quad \varphi(k) = y(k) .
\end{equation}
Moreover, for each $(\hat{w},\hat{x},\hat{\chi},\hat{\eta})$ there exists $\hat{\mu}$,
in addition to the $\hat{\xi}$ above, such that the above solution $(w(\cdot),x(\cdot),\chi(\cdot))$ to \eqref{eq:sys_cl_xy2:w}-\eqref{eq:sys_cl_xy2:chi} and the corresponding output response $y(\cdot)$, the above solution $(\omega(\cdot),\xi(\cdot))$ to~\eqref{eq:aux_all_cl:omega}-\eqref{eq:aux_all_cl:varphi} and the corresponding output response $\varphi(\cdot)$, the solution $\eta(\cdot)$ to~\eqref{eq:sys_cl_xy2:eta} with initial condition $\eta(0) = \hat{\eta}$ and fictitious input $\{y(k)\}_{k=0}^\infty$, 
and the solution $\mu(\cdot)$ to~\eqref{eq:aux_all_cl:eta} with initial condition $\mu(\ell) = \hat{\mu}$ and fictitious input $\{\varphi(k)\}_{k=\ell}^\infty=\{y(k)\}_{k=\ell}^\infty$, by~\eqref{eq:lem8:xi_beta_alpha}, also satisfy
\begin{align}
\label{eq:eta=mu}
\eta(k) & = \mu(k) \qquad \forall k \ge \ell.
\end{align}
Indeed, for each $\hat{\eta}$, take
\begin{align}
\label{eq:hatmu}
\hat{\mu} = \eta(\ell) = \Phi^\ell \hat{\eta} +  \sum_{i=0}^{\ell-1} \Phi^{\ell-i-1} G y(i) .
\end{align}
Then, $\eta(\ell) = \mu(\ell)$ (same initial condition) and $\{y(k)\}_{k=\ell}^\infty=\{\varphi(k)\}_{k=\ell}^\infty$ (same ``input'') ensure \eqref{eq:eta=mu}.

Let us summarize what we have obtained so far.
We have considered an arbitrary initial condition $(w(0),x(0),\chi(0),\eta(0)) = (\hat{w},\hat{x},\hat{\chi},\hat{\eta})$ and the input $\{u(k)\}_{k = 0}^{\infty} = \Big\{\mathcal{K} \smat{\chi(k)\\ \eta(k)} \Big\}_{k = 0}^{\infty}$ for the open loop \eqref{eq:sys_cl_xy2:w}-\eqref{eq:sys_cl_xy2:eta} and obtained a solution $(w(\cdot),x(\cdot),\chi(\cdot),\eta(\cdot))$; with this solution, we have associated a solution $(\omega(\cdot),\xi(\cdot),\mu(\cdot))$ to the open loop \eqref{eq:aux_all_cl:omega}-\eqref{eq:aux_all_cl:eta} with some initial condition $(\omega(\ell),\xi(\ell),\mu(\ell)) = ( S^\ell \hat{w},\hat{\xi},\hat{\mu})$, with $\hat{\xi}$ in~\eqref{eq:lem3:hat_xi_ini} and $\hat{\mu}$ in~\eqref{eq:hatmu}, and input $\{ v(k) \}_{k=\ell}^{\infty} = \Big\{\mathcal{K} \smat{\chi(k)\\ \eta(k)} \Big\}_{k = \ell}^{\infty}$ such that $\xi(k) = \chi(k)$ and $\mu(k) = \eta(k)$ for all $k \ge \ell$, see \eqref{eq:lem8:xi_beta_alpha}-\eqref{eq:eta=mu}.
As a consequence, $\{ v(k) \}_{k=\ell}^{\infty} = \Big\{\mathcal{K} \smat{\chi(k)\\ \eta(k)} \Big\}_{k = \ell}^{\infty} = \Big\{\mathcal{K} \smat{\xi(k)\\ \mu(k)} \Big\}_{k = \ell}^{\infty} $.
So, for some initial condition $(\omega(\ell), \xi(\ell), \mu(\ell))$, $(\omega(\cdot), \xi(\cdot), \mu(\cdot) )$ is also a solution to the closed-loop \eqref{eq:aux_all_cl}.

Based on these relations, we can show that the control law in~\eqref{eq:sys_cl_xy2:chi}-\eqref{eq:sys_cl_xy2:u} verifies the statement.
As for item~(i) of the statement, if $w(0) = \hat{w} = 0$ and $w(k) = 0$ for $k \ge 0$, we have that $\omega(\ell) =  S^\ell \hat{w} = 0 $ and $\omega(k) = 0$ for all $k \ge \ell$.
By Theorem~\ref{thm:outputregulation:aux}, $\xi(\cdot)$ and $\mu(\cdot)$ converge asymptotically to $0$.
By~\eqref{eq:lem8:xi_beta_alpha}, if $\xi(\cdot)$ converges asymptotically to $0$, so do $k \mapsto \smat{y(k - \ell)\\ \vdotsS\\ y(k - 1)}$, $k \mapsto \smat{u(k - \ell)\\ \vdotsS\\ u(k - 1)}$ and $k \mapsto \chi(k)$.
By $(A,C)$ observable and \eqref{eq:claim1:yellxy:x} in Claim \ref{claim:1:xy}, we have that also $k \mapsto x(k)$ converges asymptotically to $0$.
By~\eqref{eq:eta=mu}, if $\mu(\cdot)$ converges asymptotically to $0$, so does $k \mapsto \eta(k)$.
In summary, we have shown that for each $(\hat{w},\hat{x},\hat{\chi},\hat{\eta})$ with $\hat{w}=0$, the solution $(w(\cdot),x(\cdot),\chi(\cdot),\eta(\cdot))$ to~\eqref{eq:sys_cl_xy2} with initial condition $(0,\hat{x},\hat{\chi},\hat{\eta})$ satisfies $\lim_{k \to \infty} x(k) = 0$, $\lim_{k \to \infty} \chi(k) = 0$ and $\lim_{k \to \infty} \eta(k) = 0$.
For linear time-invariant systems, attractivity implies stability, see, e.g., \cite[Prop.~4]{lewis2017remarks}, and global attractivity.
As for item~(ii) of the statement, by Theorem~\ref{thm:outputregulation:aux}, $\varphi$ converges asymptotically to $0$ for each initial condition $(\omega(\ell),\xi(\ell),\mu(\ell))= ( S^\ell \hat{w},\hat{\xi},\hat{\mu})$.
By~\eqref{eq:lem8:xi_beta_alpha}, if $\varphi(\cdot)$ converges asymptotically to $0$, so does $k \mapsto y(k)$.
In summary, we have shown that for each $(\hat{w},\hat{x},\hat{\chi},\hat{\eta})$, the solution $(w(\cdot),x(\cdot),\chi(\cdot),\eta(\cdot))$ to~\eqref{eq:sys_cl_xy2} with initial condition $(\hat{w},\hat{x},\hat{\chi},\hat{\eta})$ satisfies $\lim_{k \to \infty} y(k) = 0$.

\bibliographystyle{siamplain}
\bibliography{pubs}

\begin{thebibliography}{10}

\bibitem{alsalti2025notes}
{\sc M.~Alsalti, V.~G. Lopez, and M.~A. M{\"u}ller}, {\em Notes on data-driven
  output-feedback control of linear {MIMO} systems}, IEEE Transactions on
  Automatic Control,  (2025).

\bibitem{bosso2025data}
{\sc A.~Bosso, M.~Borghesi, A.~Iannelli, G.~Notarstefano, and A.~R. Teel}, {\em
  Data-driven control of continuous-time {LTI} systems via non-minimal
  realizations}, arXiv preprint arXiv:2505.22505,  (2025).

\bibitem{de2019formulas}
{\sc C.~De~Persis and P.~Tesi}, {\em Formulas for data-driven control:
  Stabilization, optimality, and robustness}, IEEE Transactions on Automatic
  Control, 65 (2019), pp.~909--924.

\bibitem{horn2013matrix}
{\sc R.~A. Horn and C.~R. Johnson}, {\em Matrix Analysis}, Cambridge University
  Press, 2013.

\bibitem{hu2024dd-output-regulation}
{\sc Z.~Hu, C.~{De Persis}, J.~W. Simpson-Porco, and P.~Tesi}, {\em Data-driven
  harmonic output regulation of a class of nonlinear systems}, Systems \&
  Control Letters, 200 (2025), p.~106079.

\bibitem{hu2025enforcing}
{\sc Z.~Hu, C.~De~Persis, and P.~Tesi}, {\em Enforcing contraction via data},
  IEEE Transactions on Automatic Control,  (2025).

\bibitem{isidori2017lectures}
{\sc A.~Isidori}, {\em Lectures in feedback design for multivariable systems},
  Springer, 2017.

\bibitem{kailath1980linear}
{\sc T.~Kailath}, {\em Linear Systems}, Prentice-Hall, 1980.

\bibitem{knobloch2012topics-in-control-theory}
{\sc H.~W. Knobloch, A.~Isidori, and D.~Flockerzi}, {\em Topics in control
  theory}, vol.~22, Birkh{\"a}user, 2012.

\bibitem{lee2025input}
{\sc J.~Lee, N.~H. Jo, H.~Shim, F.~D{\"o}rfler, and J.~Kim}, {\em Input-output
  data-driven representation: Non-minimality and stability}, arXiv preprint
  arXiv:2512.01238,  (2025).

\bibitem{lewis2017remarks}
{\sc A.~D. Lewis}, {\em Remarks on stability of time-varying linear systems},
  IEEE Transactions on Automatic Control, 62 (2017), pp.~6039--6043.

\bibitem{li2024controller}
{\sc L.~Li, A.~Bisoffi, C.~{De Persis}, and N.~Monshizadeh}, {\em Controller
  synthesis from noisy-input noisy-output data}, Automatica, 183 (2026),
  p.~112545.

\bibitem{li2026cooperative}
{\sc Y.~Li, W.~Liu, G.~Wang, L.~Xie, J.~Sun, and J.~Chen}, {\em Data-driven
  cooperative output regulation via output feedback control}, IEEE Transactions
  on Automatic Control,  (2026), pp.~1--8.

\bibitem{liu2025data}
{\sc W.~Liu, Y.~Li, J.~Sun, G.~Wang, K.~You, L.~Xie, and J.~Chen}, {\em
  Data-driven internal model control for output regulation}, IEEE Transactions
  on Cybernetics,  (2025), \url{https://doi.org/10.1109/TCYB.2026.3690605}.
\newblock In press, see also arXiv preprint arXiv:2505.09255.

\bibitem{liuguo2025data}
{\sc Y.~Liu and M.~Guo}, {\em Data-driven nonlinear output regulation via
  data-enforced incremental passivity}, arXiv preprint arXiv:2506.06079,
  (2025).

\bibitem{mao2025one}
{\sc J.~Mao, E.~Williams, T.~Mylvaganam, and G.~Scarciotti}, {\em One equation
  to rule them all -- {P}art {II}: Direct data-driven reduction and
  regulation}, arXiv preprint arXiv:2508.17251,  (2025).

\bibitem{Zhu2024output-regulation}
{\sc L.~Zhu and Z.~Chen}, {\em Data informativity for robust output
  regulation}, IEEE Transactions on Automatic Control, 69 (2024),
  pp.~7075--7080.

\end{thebibliography}

\end{document}